\newtheorem{theorem}{Theorem}\numberwithin{theorem}{section}
\newtheorem{lemma}[theorem]{Lemma}
\newtheorem{proposition}[theorem]{Proposition}
\theoremstyle{definition} 
\newtheorem{definition}[theorem]{Definition}
\newtheorem{remark}[theorem]{Remark}
\newtheorem{example}[theorem]{Example}
\DeclareMathOperator{\cA}{\mathcal{A}}
\DeclareMathOperator{\cF}{\mathcal{F}}
\DeclareMathOperator{\cP}{\mathcal{P}}
\DeclareMathOperator{\bQ}{\mathbb{Q}}
\DeclareMathOperator{\bR}{\mathbb{R}}
\DeclareMathOperator{\bZ}{\mathbb{Z}}
\DeclareMathOperator{\fD}{\mathfrak{D}}
\DeclareMathOperator{\be}{\mathbf{e}}
\DeclareMathOperator{\bx}{\mathbf{x}}
\renewcommand{\mod}{{\rm mod}}
\DeclareMathOperator{\diag}{{\rm diag}}
\DeclareMathOperator{\add}{{\rm add}}
\DeclareMathOperator{\proj}{{\rm proj}}
\DeclareMathOperator{\rad}{{\rm rad}}
\DeclareMathOperator{\Hom}{{\rm Hom}}
\DeclareMathOperator{\twosilt}{{\rm 2\mathchar`-silt}}
\DeclareMathOperator{\ipsilt}{{\rm 2\mathchar`-ipsilt}}
\title[Acyclic cluster algebras with dense $g$-vector fans]{Acyclic cluster algebras with dense $g$-vector fans}
\author[T. Yurikusa]{Toshiya Yurikusa}
\address{Mathematical Institute, Tohoku University, Aoba-ku, Sendai, 980-8578, Japan}
\email{toshiya.yurikusa.d8@tohoku.ac.jp}
\subjclass[2020]{13F60; 05E45; 16G20}
\keywords{cluster algebra, $g$-vector fan, $2$-term silting theory}
\thanks{The author was supported by Japan Society for the Promotion of Science KAKENHI Grant Numbers JP20J00410, JP21K13761.}
\begin{document}

\begin{abstract}
The $g$-vector fans play an important role in studying cluster algebras and silting theory. We survey cluster algebras with dense $g$-vector fans and show that a connected acyclic cluster algebra has a dense $g$-vector fan if and only if it is either finite type or affine type. As an application, we classify finite dimensional hereditary algebras with dense $g$-vector fans.
\end{abstract}

\maketitle

\section{Introduction}

Fomin and Zelevinsky \cite{FZ02} introduced cluster algebras, which are commutative algebras with generators called cluster variables. The clusters are certain tuples of cluster variables. Their original motivation was to study total positivity of semisimple Lie groups and canonical bases of quantum groups. In recent years, cluster algebras have appeared in various areas of mathematics, for example, representation theory of quivers, Poisson geometry, and integrable systems.

Let $B$ be a connected skew-symmetrizable $n \times n$ matrix and $\cA(B)$ the associated cluster algebra with principal coefficients (see Subsection \ref{subsec:cluster algebras}). Each cluster variable of $\cA(B)$ has a numerical invariant, called the $g$-vector, as an integer vector in $\bR^n$. The $g$-vectors in $\cA(B)$ form a fan $\cF(B)$, called the $g$-vector fan of $\cA(B)$, whose cones are spanned by the $g$-vectors of some cluster variables belonging to a cluster (see Subsection \ref{subsec:g-vector fans}). In particular, $\cF(B)$ is a set of cones in $\bR^n$. The $g$-vector fans appear in various subjects (or, as their subfans), for example, the tropical cluster $\mathcal{X}$-varieties \cite{FG}, the cluster or stability scattering diagrams \cite{B,GHKK}, and the normal fans of some polytopes known as the generalized associahedron \cite{AHBHY,BMDMTY,CFZ,HPS,PPPP}.

It is well-known that $B$ is finite type if and only if $\cF(B)$ is complete (Theorem \ref{thm:finite type complete}). In this paper, we are interested in cluster algebras with dense $g$-vector fans, that is, the union of its cones is dense in $\bR^n$. When $B$ is skew-symmetric, such cluster algebras were classified in \cite{PY} except for type $X_6$ (Theorem \ref{thm:classify g-dense cluster}). In non-skew-symmetric cases, we don't know whether the $g$-vector fan is dense or not except for rank $2$, finite type or affine type (see Subsection \ref{subsec:dense g-vector fans}).

The matrix $B$ is called mutation-finite if there are finitely many matrices which are mutation equivalent to $B$. Mutation-finite skew-symmetrizable matrices were classified in \cite{FeST12a,FeST12b}. The class plays an important role in studying cluster algebras with dense $g$-vector fans. In fact, we show that if $\cF(B)$ is dense, then $B$ is mutation-finite (Theorem \ref{thm:mutation-infinite g-dense}).

In this paper, we review the classifications of mutation-finite skew-symmetrizable matrices and study cluster algebras with dense $g$-vector fans. In particular, we restrict them to acyclic cases. Seven \cite{Se11} showed that an acyclic connected skew-symmetrizable matrix is mutation-finite if and only if it is rank $2$, finite type or affine type (Theorem \ref{thm:mutation-finite acyclic}). Applying them for cluster algebras, we give a complete classification of acyclic cluster algebras with dense $g$-vector fans.

\begin{theorem}\label{thm:g-dense acyclic}
A connected acyclic cluster algebra has a dense $g$-vector fan if and only if it is either finite type or affine type.
\end{theorem}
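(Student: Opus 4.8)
The plan is to combine two facts already available in the excerpt. The "if" direction is essentially known: when a connected acyclic cluster algebra is of finite type, the $g$-vector fan $\cF(B)$ is complete (Theorem~\ref{thm:finite type complete}), hence certainly dense; when it is of affine type, density of $\cF(B)$ is stated in the excerpt as one of the cases already understood (Subsection~\ref{subsec:dense g-vector fans}). So only the "only if" direction requires work, and the strategy is a clean dichotomy on mutation-finiteness.

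First I would invoke Theorem~\ref{thm:mutation-infinite g-dense}: if $B$ is not mutation-finite, then $\cF(B)$ is not dense. Hence if $\cA(B)$ has a dense $g$-vector fan, $B$ must be mutation-finite. Next, since $\cA(B)$ is assumed connected and acyclic, I would apply Seven's theorem (Theorem~\ref{thm:mutation-finite acyclic}): a connected acyclic skew-symmetrizable matrix is mutation-finite if and only if it is rank $2$, finite type, or affine type. Combining these, a connected acyclic cluster algebra with dense $g$-vector fan must fall into one of these three classes. It then remains to rule out the rank~$2$ cases that are neither finite nor affine type, i.e.\ the rank~$2$ matrices $B = \begin{pmatrix} 0 & b \\ -c & 0 \end{pmatrix}$ with $bc \geq 5$ (the "wild" rank~$2$ case); for these the excerpt notes that density is understood in rank~$2$, and the known answer is that $\cF(B)$ fails to be dense (the $g$-vector fan has two limiting rays bounding an open region containing no cones). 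That disposes of the remaining case and forces $\cA(B)$ to be finite or affine type.

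Concretely, the argument has the shape: (1) dense $\Rightarrow$ mutation-finite, by Theorem~\ref{thm:mutation-infinite g-dense}; (2) mutation-finite $+$ connected acyclic $\Rightarrow$ rank $2$, finite, or affine type, by Theorem~\ref{thm:mutation-finite acyclic}; (3) in the rank~$2$ non-finite non-affine subcase ($bc \geq 5$), $\cF(B)$ is not dense, by the explicit rank~$2$ analysis; (4) conversely, finite type gives completeness (Theorem~\ref{thm:finite type complete}) and affine type gives density by the cases recorded in Subsection~\ref{subsec:dense g-vector fans}. Steps (1), (2), (4) are pure citations to results stated earlier; the only genuinely self-contained point is (3).

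I expect step (3) — the rank~$2$ wild case — to be the only real obstacle, and even it is mild: one must recall or reprove that for $B$ of rank $2$ with $bc\ge 5$ the sequence of $g$-vectors obtained by iterated mutation converges to two distinct rays spanning a salient cone, so that the complement of $\bigcup_{\sigma\in\cF(B)}\sigma$ contains a nonempty open cone, hence $\cF(B)$ is not dense; this is a direct computation with the rank~$2$ exchange relations (or a citation to the rank~$2$ results alluded to in Subsection~\ref{subsec:dense g-vector fans}). Everything else is bookkeeping: checking that "connected acyclic cluster algebra of finite/affine type" is exactly the image under $B \mapsto \cA(B)$ of the corresponding matrices, and that the equivalences are insensitive to the choice of seed since density of $\cF(B)$ and mutation-finiteness of $B$ are both mutation-invariant properties.
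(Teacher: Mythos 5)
Your proposal is correct and follows essentially the same route as the paper: the paper's proof likewise reduces to the mutation-finite case via Theorem~\ref{thm:mutation-infinite g-dense}, applies Seven's classification (Theorem~\ref{thm:mutation-finite acyclic}), disposes of the rank~$2$ case by the fact that $\cF(B_{b,c})$ is dense if and only if $bc\le 4$ (Example~\ref{exam:rank 2 g-vector fan}), and concludes with Theorems~\ref{thm:finite type complete} and~\ref{thm:affine type dense}.
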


As an application, we consider $g$-vector fans in $2$-term silting theory (see Subsection \ref{subsec:g-vector fans on 2-term}). They play an important role in the study of stability conditions \cite{Kin} and their wall-chamber structures (see e.g. \cite{AHIKM,A,BST,DIJ,T,Y18}). Applying Theorem \ref{thm:g-dense acyclic}, we prove the following.

\begin{theorem}\label{thm:g-tame hereditary}
A finite dimensional, connected, hereditary algebra over an algebraically closed field $K$ has a dense $g$-vector fan if and only if it is Morita equivalent to the path algebra of a quiver of Dynkin or extended Dynkin type.
\end{theorem}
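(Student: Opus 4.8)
The plan is to deduce Theorem \ref{thm:g-tame hereditary} from Theorem \ref{thm:g-dense acyclic} by passing from a hereditary algebra to its associated acyclic cluster algebra. First I would recall that a finite dimensional, connected, hereditary algebra over the algebraically closed field $k$ is Morita equivalent to $kQ$ for a unique connected quiver $Q$ without oriented cycles (Gabriel's theorem). Since $Q$ is acyclic, its adjacency data defines a skew-symmetric acyclic matrix $B=B(Q)$, and the cluster algebra $\cA(B)$ is a connected acyclic cluster algebra whose $g$-vector fan $\cF(B)$ coincides with the $g$-vector fan attached to $kQ$ in $\tau$-tilting theory (see Subsection \ref{subsec:g-vector fans on tau}); this identification is the technical heart of the reduction and I would state it as the key lemma, citing the standard comparison between support $\tau$-tilting modules over $kQ$ and clusters of $\cA(B(Q))$ via the Adachi--Iyama--Reiten correspondence and the categorification of acyclic cluster algebras.

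With that identification in hand, the statement becomes: $\cF(B(Q))$ is dense if and only if $Q$ is Dynkin or extended Dynkin. Applying Theorem \ref{thm:g-dense acyclic} to the acyclic cluster algebra $\cA(B(Q))$, density of $\cF(B(Q))$ is equivalent to $\cA(B(Q))$ being of finite type or affine type. Then I would invoke the classical dictionary (Fomin--Zelevinsky for finite type, and the characterization of affine type acyclic matrices): an acyclic skew-symmetric matrix $B(Q)$ gives a cluster algebra of finite type exactly when $Q$ is an orientation of a Dynkin diagram, and of affine type exactly when $Q$ is an orientation of an extended Dynkin diagram. Combining these equivalences yields the theorem. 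I would also note that Morita equivalence does not change the $g$-vector fan (it only depends on the module category up to equivalence), so the passage to $kQ$ loses no information.

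The main obstacle I expect is making the identification $\cF(kQ)=\cF(B(Q))$ precise and correctly normalized: one must check that the $g$-vectors of indecomposable $\tau$-rigid $kQ$-modules (together with the negatives of the simples / shifted projectives) match, up to the usual sign and basis conventions, the $g$-vectors of cluster variables of $\cA(B(Q))$, and that maximal cones on both sides correspond. This is where one needs the precise statement from Subsection \ref{subsec:g-vector fans on tau} together with the acyclic categorification; once it is set up, density transfers verbatim. A secondary, purely bookkeeping point is to confirm that "connected" on the algebra side corresponds to "connected" (i.e. indecomposable $B$) on the cluster side, so that Theorem \ref{thm:g-dense acyclic} applies as stated.
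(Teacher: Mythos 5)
Your proposal is correct and follows essentially the same route as the paper: reduce via Morita equivalence to $kQ$ for an acyclic quiver $Q$, identify $\cF(kQ)$ with $\cF(B(Q))$ through the additive categorification (Theorem \ref{thm:categorification}), and then apply Theorem \ref{thm:g-dense acyclic} together with the fact that an acyclic $B$ is of finite (resp.\ affine) type exactly when its diagram is Dynkin (resp.\ extended Dynkin). The identification of the two $g$-vector fans that you flag as the technical heart is precisely what the paper packages as Theorem \ref{thm:categorification}, so no further argument is needed.
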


This paper is organized as follows. In Section \ref{sec:cluster algebras}, we recall the notions of cluster algebras and their $g$-vector fans. In particular, we survey two classes of mutation-finite cluster algebras and ones with dense $g$-vector fans. We give the non-denseness of $g$-vector fans for infinite mutation type (Theorem \ref{thm:mutation-infinite g-dense}) in Subsection \ref{subsec:infinite mutation type} and prove Theorem \ref{thm:g-dense acyclic} in Subsection \ref{subsec:acyclic}. In Subection \ref{subsec:future problems}, we summarize what we already know for non-acyclic cases. In Section \ref{sec:rep theory}, we recall $2$-term silting theory and $g$-vector fans on that. We get Theorem \ref{thm:g-tame hereditary} via an additive categorification of cluster algebras in Subsection \ref{subsec:categorification}.

\section{Cluster algebras with dense $g$-vector fans}\label{sec:cluster algebras}

\subsection{Mutation of matrices}

Let $n$ be a positive integer. We say that an $n \times n$ integer matrix $B=(b_{ij})$ is \emph{skew-symmetrizable} if there is a positive integer diagonal matrix $D=\diag(d_1,\ldots,d_n)$ such that $DB$ is skew-symmetric, that is, $d_i b_{ij}= - d_j b_{ji}$. We consider an operation that gives new matrices from a given matrix, which plays an important role in defining cluster algebras.

\begin{definition}\label{def:matrix mutation}
Let $m \ge n$ be positive integers and $B=(b_{ij})$ an $m\times n$ integer matrix whose upper part $(b_{ij})_{1 \le i, j \le n}$ is skew-symmetrizable. The \emph{mutation of $B$ at $k$} $(1 \le k \le n)$ is the matrix $\mu_k B=(b'_{ij})$ given by
\[
b'_{ij} =  \left\{\begin{array}{ll}
 -b_{ij} & \mbox{if} \ \ i=k \ \ \mbox{or} \ \ j=k,\\
 b_{ij}+b_{ik}[b_{kj}]_+ + [-b_{ik}]_+b_{kj} & \mbox{otherwise},
\end{array} \right.
\]
where $[a]_+:={\rm max}(a,0)$.
\end{definition}

It is easy to check that the upper part of $\mu_k B$ is still skew-symmetrizable. Moreover, $\mu_k$ is an involution, that is, $\mu_k \mu_k B = B$.

\begin{example}\label{exam:rank2 mutations}
Any skew-symmetrizable $2 \times 2$ nonzero matrix is given by
\[
B_{b,c}:=\begin{bmatrix}0 & c \\ -b & 0\end{bmatrix},
\]
where $b$ and $c$ are integers and $bc > 0$. Then $\mu_i(B_{b,c})=B_{-b,-c}$ for $i \in \{1,2\}$. Moreover, for the $4 \times 2$ matrix $\hat{B}_{2,1}$ (see Subsection \ref{subsec:cluster algebras}), we have the sequence of mutations as follows:
\[
\begin{tikzpicture}[baseline=-1mm]
\node(0)at(0,0){$\begin{bmatrix}0 & 1 \\ -2 & 0 \\ 1 & 0 \\ 0 & 1 \end{bmatrix}$};
\node(1)at(2.5,0){$\begin{bmatrix}0 & -1 \\ 2 & 0 \\ -1 & 1 \\ 0 & 1 \end{bmatrix}$};
\node(2)at(5,0){$\begin{bmatrix}0 & 1 \\ -2 & 0 \\ 1 & -1 \\ 2 & -1 \end{bmatrix}$};
\node(3)at(0,-2){$\begin{bmatrix}0 & -1 \\ 2 & 0 \\ 1 & 0 \\ 0 & -1 \end{bmatrix}$};
\node(4)at(2.5,-2){$\begin{bmatrix}0 & 1 \\ -2 & 0 \\ -1 & 0 \\ 0 & -1 \end{bmatrix}$};
\node(5)at(5,-2){$\begin{bmatrix}0 & -1 \\ 2 & 0 \\ -1 & 0 \\ -2 & 1 \end{bmatrix}$};
\draw[<->](0)--node[above]{$\mu_1$}(1); \draw[<->](1)--node[above]{$\mu_2$}(2);
\draw[<->](3)--node[above]{$\mu_1$}(4); \draw[<->](4)--node[above]{$\mu_2$}(5);
\draw[<->](0)..controls(-2,0)and(-2,-2)..node[left]{$\mu_2$}(3);
\draw[<->](2)..controls(7,0)and(7,-2)..node[right]{$\mu_1$}(5);
\end{tikzpicture}
\]
\end{example}

\subsection{Finite mutation type}\label{subsec:finite mutation type}

Let $B=(b_{ij})_{1 \le i,j \le n}$ be a skew-symmetrizable $n \times n$ matrix.

\begin{definition}
We say that $B$ is
\begin{itemize}
\item \emph{mutation equivalent to $B'$} if it is obtained from $B'$ by a sequence of mutations;
\item \emph{mutation-finite} if there are only finitely many matrices which are mutation equivalent to $B$;
\item \emph{mutation-infinite} if it is not mutation-finite.
\end{itemize}
\end{definition}

By Example \ref{exam:rank2 mutations}, $B_{b,c}$ is mutation equivalent to $B_{b',c'}$ if and only if $(b',c')=(\pm b,\pm c)$. Thus any skew-symmetrizable $2 \times 2$ matrix  is mutation-finite.

For convenience, we prepare graphs representing skew-symmetrizable matrices \cite[Definition 7.3]{FZ03}.

\begin{definition}
The \emph{diagram} of $B$ is a weighted directed graph $\Gamma(B)$ with vertices $1,\ldots,n$ such that there is an arrow with weight $|b_{ij}b_{ji}|$ from $j$ to $i$ if and only if $b_{ij}>0$.
\end{definition}

\begin{example}
For positive integers $b$ and $c$, we have the diagram
\[
\Gamma(B_{b,c})=\left[ 1 \xleftarrow{bc} 2 \right].
\]
\end{example}

Remark that distinct skew-symmetrizable matrices may have the same diagram. When $B$ is skew-symmetric, its diagram $\Gamma(B)$ is identified with a \emph{quiver}, which is a directed graph allowing multiple arrows, with vertices $1,\ldots,n$ such that there are $[b_{ij}]_+$ arrows from $j$ to $i$. This naturally restricts to a bijection between skew-symmetric matrices and quivers without loops nor oriented cycles of length two.

\emph{Dynkin} (resp., \emph{extended Dynkin}) \emph{diagrams} are Dynkin (resp., extended Dynkin) graphs as in Figure \ref{fig:(extended) Dynkin graphs} with arbitrary orientations except for orientations of $A_n^{(1)}$ forming an oriented cycle. Notice that $\Gamma(B_{b,c})$ is a Dynkin (resp., extended Dynkin) diagram if and only if $bc<4$ (resp., $bc=4$).

\begin{definition}
We say that $B$ is
\begin{itemize}
\item \emph{connected} if $\Gamma(B)$ is connected;
\item \emph{acyclic} if $\Gamma(B)$ has no oriented cycles;
\item \emph{type $\Gamma$} if it is mutation equivalent to a skew-symmetrizable matrix whose diagram (or quiver) is $\Gamma$.
\item \emph{finite type} if it is type a Dynkin diagram;
\item \emph{affine type} if it is type an extended Dynkin diagram.
\end{itemize}
\end{definition}

\begin{figure}[htp]
\begin{tabular}{c|c|c|c}
\multicolumn{2}{c|}{Dynkin graphs} & \multicolumn{2}{c}{extended Dynkin graphs}
\begin{tikzpicture}[baseline=-1mm]\node at(0,0.3){};\node at(0,-0.3){};\end{tikzpicture}
\\\hline
\multirow{4.5}{*}{$A_n$} &
\multirow{4.5}{*}{\begin{tikzpicture}[baseline=0mm,scale=0.7]
\node at(0,0){$\bullet$}; \node at(1,0){$\bullet$}; \node at(4,0){$\bullet$};
\draw(0,0)--(2,0); \draw(3,0)--(4,0); \draw[dotted](2,0)--(3,0);
\end{tikzpicture}}
& $A_1^{(1)}$ &
\begin{tikzpicture}[baseline=0mm]
\node at(0,0){$\bullet$}; \node at(1,0){$\bullet$};
\draw(0,0)--node[above]{$4$}(1,0); \node at(0,0.5){};\node at(0,-0.2){};
\end{tikzpicture}
\\\cline{3-4}
& & $A_n^{(1)}$ &
\begin{tikzpicture}[baseline=3mm,scale=0.7]
\node at(0,0){$\bullet$}; \node at(1,0){$\bullet$}; \node at(4,0){$\bullet$}; \node at(2,1){$\bullet$};
\draw(4,0)--(2,1)--(0,0)--(2,0); \draw(3,0)--(4,0); \draw[dotted](2,0)--(3,0); \node at(0,1.5){};\node at(0,-0.5){};
\end{tikzpicture}
\\\hline
\multirow{1}{*}{$\begin{matrix}B_n \\\rotatebox{90}{=} \\ C_n \end{matrix}$} &
\multirow{2.5}{*}{\begin{tikzpicture}[baseline=0mm,scale=0.7]
\node at(0,0){$\bullet$}; \node at(1,0){$\bullet$}; \node at(4,0){$\bullet$};
\draw(0,0)--node[above]{$2$}(1,0)--(2,0); \draw[-](3,0)--(4,0); \draw[dotted](2,0)--(3,0);
\end{tikzpicture}}
& $B_n^{(1)}$ &
\begin{tikzpicture}[baseline=-1mm,scale=0.7]
\node at(-1,0){$\bullet$}; \node at(0,0){$\bullet$}; \node at(3,0){$\bullet$}; \node at(3.7,0.7){$\bullet$}; \node at(3.7,-0.7){$\bullet$};
\draw(-1,0)--node[above]{$2$}(0,0)--(1,0); \draw[dotted](1,0)--(2,0); \draw(2,0)--(3,0)--(3.7,0.7) (3,0)--(3.7,-0.7);
\node at(0,1){};\node at(0,-1){};
\end{tikzpicture}
\\\cline{3-4}
 & & $C_n^{(1)}$ &
\begin{tikzpicture}[baseline=-0mm,scale=0.7]
\node at(0,0){$\bullet$}; \node at(1,0){$\bullet$}; \node at(4,0){$\bullet$};
\draw(0,0)--node[above]{$2$}(1,0)--(2,0); \draw[-](3,0)--node[above]{$2$}(4,0); \draw[dotted](2,0)--(3,0);
\node at(0,0.7){};\node at(0,-0.3){};
\end{tikzpicture}
\\\hline
$D_n$ &
\begin{tikzpicture}[baseline=-1mm,scale=0.7]
\node at(-1,0){$\bullet$}; \node at(0,0){$\bullet$}; \node at(3,0){$\bullet$}; \node at(3.7,0.7){$\bullet$}; \node at(3.7,-0.7){$\bullet$};
\draw(-1,0)--(0,0)--(1,0); \draw[dotted](1,0)--(2,0); \draw(2,0)--(3,0)--(3.7,0.7) (3,0)--(3.7,-0.7);
\end{tikzpicture}
& $D_n^{(1)}$ &
\begin{tikzpicture}[baseline=-1mm,scale=0.7]
\node at(0,0){$\bullet$}; \node at(1,0){$\bullet$}; \node at(4,0){$\bullet$}; \node at(4.7,0.7){$\bullet$}; \node at(4.7,-0.7){$\bullet$};
\node at(-0.7,0.7){$\bullet$}; \node at(-0.7,-0.7){$\bullet$};
\draw(0,0)--(1,0)--(2,0); \draw[dotted](2,0)--(3,0); \draw(3,0)--(4,0)--(4.7,0.7) (4,0)--(4.7,-0.7); \draw(-0.7,0.7)--(0,0)--(-0.7,-0.7);
\node at(0,1){};\node at(0,-1){};
\end{tikzpicture}
\\\hline
$E_6$ &
\begin{tikzpicture}[baseline=3mm,scale=0.7]
\node at(-2,0){$\bullet$}; \node at(-1,0){$\bullet$}; \node at(0,0){$\bullet$};
\node at(1,0){$\bullet$}; \node at(2,0){$\bullet$}; \node at(0,1){$\bullet$};
\draw(-2,0)--(2,0); \draw(0,0)--(0,1);
\end{tikzpicture}
& $E_6^{(1)}$ &
\begin{tikzpicture}[baseline=7mm,scale=0.7]
\node at(-2,0){$\bullet$}; \node at(-1,0){$\bullet$}; \node at(0,0){$\bullet$};
\node at(1,0){$\bullet$}; \node at(2,0){$\bullet$}; \node at(0,1){$\bullet$}; \node at(0,2){$\bullet$};
\draw(-2,0)--(2,0); \draw(0,0)--(0,2); \node at(0,2.3){};\node at(0,-0.3){};
\end{tikzpicture}
\\\hline
$E_7$ &
\begin{tikzpicture}[baseline=3mm,scale=0.7]
\node at(-2,0){$\bullet$}; \node at(-1,0){$\bullet$}; \node at(0,0){$\bullet$};
\node at(1,0){$\bullet$}; \node at(2,0){$\bullet$}; \node at(3,0){$\bullet$}; \node at(0,1){$\bullet$};
\draw(-2,0)--(3,0); \draw(0,0)--(0,1);
\end{tikzpicture}
& $E_7^{(1)}$ &
\begin{tikzpicture}[baseline=3mm,scale=0.65]
\node at(-2,0){$\bullet$}; \node at(-1,0){$\bullet$}; \node at(0,0){$\bullet$};
\node at(1,0){$\bullet$}; \node at(2,0){$\bullet$}; \node at(3,0){$\bullet$}; \node at(0,1){$\bullet$}; \node at(-3,0){$\bullet$};
\draw(-3,0)--(3,0); \draw(0,0)--(0,1); \node at(0,1.3){};\node at(0,-0.3){};
\end{tikzpicture}
\\\hline
$E_8$ &
\begin{tikzpicture}[baseline=6mm,scale=0.65]
\node at(-1,0){$\bullet$}; \node at(0,0){$\bullet$}; \node at(0,1){$\bullet$}; \node at(0,2){$\bullet$};
\node at(1,0){$\bullet$}; \node at(2,0){$\bullet$}; \node at(3,0){$\bullet$}; \node at(4,0){$\bullet$};
\draw(-1,0)--(4,0); \draw(0,0)--(0,2);
\end{tikzpicture}
& $E_8^{(1)}$ &
\begin{tikzpicture}[baseline=6mm,scale=0.6]
\node at(-1,0){$\bullet$}; \node at(0,0){$\bullet$}; \node at(0,1){$\bullet$}; \node at(0,2){$\bullet$};
\node at(1,0){$\bullet$}; \node at(2,0){$\bullet$}; \node at(3,0){$\bullet$}; \node at(4,0){$\bullet$}; \node at(5,0){$\bullet$};
\draw(-1,0)--(5,0); \draw(0,0)--(0,2); \node at(0,2.3){};\node at(0,-0.3){};
\end{tikzpicture}
\\\hline
$F_4$ &
\begin{tikzpicture}[baseline=0mm]
\node at(0,0){$\bullet$}; \node at(1,0){$\bullet$}; \node at(2,0){$\bullet$}; \node at(3,0){$\bullet$};
\draw(0,0)--(1,0)--node[above]{$2$}(2,0)--(3,0);
\end{tikzpicture}
& $F_4^{(1)}$ &
\begin{tikzpicture}[baseline=0mm,scale=0.9]
\node at(0,0){$\bullet$}; \node at(1,0){$\bullet$}; \node at(2,0){$\bullet$}; \node at(3,0){$\bullet$}; \node at(4,0){$\bullet$};
\draw(0,0)--(1,0)--node[above]{$2$}(2,0)--(4,0); \node at(0,0.6){};\node at(0,-0.2){};
\end{tikzpicture}
\\\hline
$G_2$ &
\begin{tikzpicture}[baseline=0mm]
\node at(0,0){$\bullet$}; \node at(1,0){$\bullet$};
\draw(0,0)--node[above]{$3$}(1,0);
\end{tikzpicture}
& $G_2^{(1)}$ &
\begin{tikzpicture}[baseline=0mm]
\node at(0,0){$\bullet$}; \node at(1,0){$\bullet$}; \node at(2,0){$\bullet$};
\draw(0,0)--node[above]{$3$}(1,0)--(2,0); \node at(0,0.6){};\node at(0,-0.2){};
\end{tikzpicture}
\end{tabular}
   \caption{Dynkin graphs and extended Dynkin graphs, where unspecified weights are one}
   \label{fig:(extended) Dynkin graphs}
\end{figure}

Felikson, Shapiro and Tumarkin completely classified mutation-finite skew-symmetrizable matrices.

\begin{theorem}\cite{FeST12a,FeST12b}\label{thm:classify mutation-finite}
A connected skew-symmetrizable matrix is mutation-finite if and only if it is one of the following cases:
\begin{itemize}
\item rank $2$;
\item finite type or affine type;
\item defined from a triangulated orbifold (see \cite{FoST,FeST12c});
\item one of nine types $E_6^{(1,1)}$, $E_7^{(1,1)}$, $E_8^{(1,1)}$, $X_6$, $X_7$, $F_4^{(\ast,+)}$, $F_4^{(\ast,\ast)}$, $G_2^{(\ast,+)}$ or $G_2^{(\ast,\ast)}$ as in Figure \ref{fig:exceptional diagrams}.
\end{itemize}
\end{theorem}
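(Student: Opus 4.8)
The plan is to follow the strategy of \cite{FeST12a,FeST12b}: translate the statement into a purely combinatorial question about diagrams and argue by a local-to-global principle. The starting point is an elementary inheritance property. If $S\subseteq\{1,\dots,n\}$ and we perform only mutations at indices lying in $S$, then for $i,j\in S$ the mutation rule for $b_{ij}$ involves only entries indexed by $S$, so $(\mu_k B)|_S=\mu_k(B|_S)$ for every $k\in S$; hence the mutation class of the principal submatrix $B|_S$ injects into $\{\,B'|_S : B'\text{ mutation equivalent to }B\,\}$, and $B|_S$ is mutation-finite whenever $B$ is. Equivalently, a matrix is mutation-finite if and only if no full subdiagram of its diagram is \emph{minimal mutation-infinite}, that is, mutation-infinite with all proper full subdiagrams mutation-finite. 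The rank $2$ case of the theorem is then immediate from Example~\ref{exam:rank2 mutations}, since the mutation class of $B_{b,c}$ is $\{B_{b,c},B_{-b,-c}\}$ for all $b,c$.

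Next I would dispose of the low-rank base cases by direct, finite inspection. In rank $3$ one splits according to whether the diagram is acyclic --- where a short analysis (mutating to force edge weights to grow) isolates exactly the Dynkin and extended-Dynkin diagrams --- or contains an oriented triangle, where one checks by hand which weighted triangles have finite mutation class; this produces an explicit finite list of rank-$3$ mutation-finite diagrams (notably including the Markov quiver, which comes from the once-punctured torus), and this list serves as the collection of ``local models'' for the inductive step. One also records that the diagrams arising from triangulated surfaces and orbifolds are mutation-finite, which is the content of \cite{FoST} for surfaces and \cite{FeST12c} for orbifolds, so those need not be revisited.

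The heart of the argument is the rank $\ge 4$ case, which I would organize around the Fomin--Shapiro--Thurston notion of a \emph{block decomposition}. Block-decomposable diagrams coincide with those coming from triangulated surfaces and orbifolds, so the classification reduces to proving the dichotomy: a connected mutation-finite diagram of rank $\ge 3$ is either block-decomposable, or else of rank bounded by an absolute constant. To obtain the bound one classifies the minimal non-block-decomposable mutation-finite diagrams --- those whose proper full subdiagrams are all block-decomposable --- using the well-understood combinatorics of surface and orbifold triangulations to severely restrict the ways a surface diagram can be extended by a single vertex while remaining mutation-finite; one shows such minimal diagrams have small rank, so a finite (computer-assisted) search over diagrams of bounded rank locates them together with their finitely many mutation-finite extensions. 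This search yields precisely the nine exceptional types $E_6^{(1,1)},E_7^{(1,1)},E_8^{(1,1)},X_6,X_7,F_2^{(\ast,+)},F_2^{(\ast,\ast)},G_2^{(\ast,+)},G_2^{(\ast,\ast)}$, plus (redundantly) the finite and affine types. Finally the skew-symmetrizable case is threaded through the skew-symmetric one via folding: diagrams admitting an unfolding inherit the classification from the skew-symmetric case together with the compatibility of mutation with folding, and the remaining non-unfoldable diagrams (the orbifold families and the sporadic $F_2$ and $G_2$ types) are analysed directly.

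The step I expect to be the main obstacle is exactly this dichotomy: showing that the purely local requirement ``every full subdiagram of bounded rank, in every mutation-equivalent diagram, is mutation-finite'' is rigid enough to impose a global surface or orbifold structure, with only finitely many sporadic escapes bounded in rank. This is the technical core of \cite{FeST12a,FeST12b}: it combines delicate combinatorics of block gluings with finite machine verification (both to confirm mutation-finiteness of the candidate exceptions and to rule out their near-misses), and carrying it out in full is well beyond a sketch, so in a survey one cites those papers for the details.
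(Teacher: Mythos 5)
The paper does not prove this theorem at all --- it is quoted as an imported classification result from \cite{FeST12a,FeST12b} --- and your proposal likewise defers its technical core (the dichotomy between block-decomposable diagrams and bounded-rank exceptions, plus the exhaustive search) to those same references, so the two are in essentially the same position. Your outline of the Felikson--Shapiro--Tumarkin strategy (restriction to full subdiagrams, rank~$2$ and rank~$3$ base cases, block decomposability versus a finite list of sporadic types, and unfoldings to reduce the skew-symmetrizable case to the skew-symmetric one) is an accurate summary of how the cited proof actually proceeds.
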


\begin{figure}[htp]
\begin{tabular}{c|c|c|c}
$E_6^{(1,1)}$ &
\begin{tikzpicture}[baseline=0mm,scale=0.8]
\node(l1)at(-0.5,0){$\bullet$}; \node(l2)at(-1.5,0){$\bullet$}; 
\node(u)at(0,1){$\bullet$}; \node(d)at(0,-1){$\bullet$};
\node(r1)at(0.5,0){$\bullet$}; \node(r2)at(1.5,0){$\bullet$}; 
\node(r3)at(2.5,0){$\bullet$}; \node(r4)at(3.5,0){$\bullet$};
\draw[<-](l2)--(l1); \draw[->](u)--(l1); \draw[->](l1)--(d); \draw[->](u)--(r1); \draw[->](r1)--(d);
\draw[->](-0.06,-0.7)--(-0.06,0.7); \draw[->](0.06,-0.7)--(0.06,0.7);
\draw[->](r2)--(r1); \draw[->](r4)--(r3); \draw[->](u)--(r3); \draw[->](r3)--(d);
\end{tikzpicture}
& $X_6$ &
\begin{tikzpicture}[baseline=0mm,scale=0.8]
\node(0)at(0,0){$\bullet$}; \node(l)at(180:1){$\bullet$}; \node(lu)at(120:1){$\bullet$};
\node(r)at(0:1){$\bullet$}; \node(ru)at(60:1){$\bullet$}; \node(d)at(0,-1){$\bullet$};
\draw[->](0)--(l); \draw[->](lu)--(0); \draw[->](0)--(ru); \draw[->](r)--(0); \draw[->](d)--(0);
\draw[->](170:1)--(130:1); \draw[->](173:0.9)--(127:0.9);
\draw[->](50:1)--(10:1); \draw[->](53:0.9)--(7:0.9); \node at(0,1.3){};\node at(0,-1.3){};
\end{tikzpicture}
\\\hline
$E_7^{(1,1)}$ &
\begin{tikzpicture}[baseline=0mm,scale=0.8]
\node(l1)at(-0.5,0){$\bullet$}; \node(l2)at(-1.5,0){$\bullet$}; \node(l3)at(-2.5,0){$\bullet$}; 
\node(u)at(0,1){$\bullet$}; \node(d)at(0,-1){$\bullet$};
\node(r1)at(0.5,0){$\bullet$}; \node(r2)at(1.5,0){$\bullet$}; 
\node(r3)at(2.5,0){$\bullet$}; \node(r4)at(3.5,0){$\bullet$};
\draw[<-](l3)--(l2); \draw[<-](l2)--(l1); \draw[->](u)--(l1); \draw[->](l1)--(d); \draw[->](u)--(r1); \draw[->](r1)--(d);
\draw[->](-0.06,-0.7)--(-0.06,0.7); \draw[->](0.06,-0.7)--(0.06,0.7);
\draw[->](r3)--(r2); \draw[->](r4)--(r3); \draw[->](u)--(r2); \draw[->](r2)--(d);
\end{tikzpicture}
& $X_7$ &
\begin{tikzpicture}[baseline=0mm,scale=0.8]
\node(0)at(0,0){$\bullet$}; \node(l)at(180:1){$\bullet$}; \node(lu)at(120:1){$\bullet$};
\node(r)at(0:1){$\bullet$}; \node(ru)at(60:1){$\bullet$};
\node(dl)at(-120:1){$\bullet$}; \node(dr)at(-60:1){$\bullet$};
\draw[->](0)--(l); \draw[->](lu)--(0); \draw[->](0)--(ru); \draw[->](r)--(0); \draw[->](0)--(dr); \draw[->](dl)--(0);
\draw[->](170:1)--(130:1); \draw[->](173:0.9)--(127:0.9);
\draw[->](50:1)--(10:1); \draw[->](53:0.9)--(7:0.9);
\draw[->](-70:1)--(-110:1); \draw[->](-67:0.9)--(-113:0.9); \node at(0,1.3){};\node at(0,-1.3){};
\end{tikzpicture}
\\\hline
$E_8^{(1,1)}$ &
\begin{tikzpicture}[baseline=0mm,scale=0.7]
\node(l1)at(-0.5,0){$\bullet$}; \node(l2)at(-1.5,0){$\bullet$};
\node(u)at(0,1){$\bullet$}; \node(d)at(0,-1){$\bullet$};
\node(r1)at(0.5,0){$\bullet$}; \node(r2)at(1.5,0){$\bullet$}; 
\node(r3)at(2.5,0){$\bullet$}; \node(r4)at(3.5,0){$\bullet$};
\node(r5)at(4.5,0){$\bullet$}; \node(r6)at(5.5,0){$\bullet$};
\draw[<-](l2)--(l1); \draw[->](u)--(l1); \draw[->](l1)--(d); \draw[->](u)--(r1); \draw[->](r1)--(d);
\draw[->](-0.06,-0.7)--(-0.06,0.7); \draw[->](0.06,-0.7)--(0.06,0.7);
\draw[->](r3)--(r2); \draw[->](r4)--(r3); \draw[->](u)--(r2); \draw[->](r2)--(d);
\draw[->](r5)--(r4); \draw[->](r6)--(r5); \node at(0,1.3){};\node at(0,-1.3){};
\end{tikzpicture}
\\\hline
$F_4^{(\ast,+)}$ &
\begin{tikzpicture}[baseline=0mm,scale=0.75]
\node(l2)at(-1,0){$\bullet$}; \node(l1)at(0,0){$\bullet$}; \node(l)at(1,0){$\bullet$};
\node(u)at(2,0.8){$\bullet$}; \node(d)at(2,-0.8){$\bullet$}; \node(r)at(3,0){$\bullet$};
\draw[<-](l2)--(l1); \draw[<-](l1)--(l); \draw[<-](l)--node[above left]{$2$}(u); \draw[<-](u)--node[right]{$4$}(d);
\draw[<-](d)--node[below left]{$2$}(l); \draw[<-](d)--(r); \draw[<-](r)--(u);
\end{tikzpicture}
& $G_2^{(\ast,+)}$ &
\begin{tikzpicture}[baseline=0mm,scale=0.8]
\node(l1)at(0,0){$\bullet$}; \node(l)at(1,0){$\bullet$}; \node(u)at(2,0.8){$\bullet$}; \node(d)at(2,-0.8){$\bullet$};
\draw[<-](l1)--(l); \draw[<-](l)--node[above left]{$3$}(u); \draw[<-](u)--node[right]{$4$}(d); \draw[<-](d)--node[below left]{$3$}(l); 
\node at(0,1.1){}; \node at(0,-1.1){};
\end{tikzpicture}
\\\hline
$F_4^{(\ast,\ast)}$ &
\begin{tikzpicture}[baseline=0mm,scale=0.75]
\node(l1)at(0,0){$\bullet$}; \node(l)at(1,0){$\bullet$};
\node(u)at(2,0.8){$\bullet$}; \node(d)at(2,-0.8){$\bullet$}; \node(r)at(3,0){$\bullet$}; \node(r1)at(4,0){$\bullet$}; 
\draw[<-](l1)--(l); \draw[<-](l)--node[above left]{$2$}(u); \draw[<-](u)--node[right]{$4$}(d);
\draw[<-](d)--node[below left]{$2$}(l); \draw[<-](d)--(r); \draw[<-](r)--(u); \draw[<-](r)--(r1); \node at(0,1.1){}; \node at(0,-1.1){};
\end{tikzpicture}
& $G_2^{(\ast,\ast)}$ &
\begin{tikzpicture}[baseline=0mm,scale=0.8]
\node(l)at(1,0){$\bullet$}; \node(u)at(2,0.8){$\bullet$}; \node(d)at(2,-0.8){$\bullet$}; \node(r)at(3,0){$\bullet$};
\draw[<-](l)--node[above left]{$3$}(u); \draw[<-](u)--node[right]{$4$}(d); \draw[<-](d)--node[below left]{$3$}(l);
\draw[<-](d)--(r); \draw[<-](r)--(u);
\end{tikzpicture}
\end{tabular}
   \caption{Exceptional quivers and diagrams}
   \label{fig:exceptional diagrams}
\end{figure}

\begin{remark}\label{rem:mut-finquivers}
\begin{enumerate}
\item The quivers $E_6^{(1,1)}$, $E_7^{(1,1)}$ and $E_8^{(1,1)}$ are called \emph{tubular quivers} and studied in e.g. \cite{BG,BGJ,GG}.
\item Derksen and Owen \cite{DO} found the quivers $X_6$ and $X_7$ as new mutation-finite quivers.
\item The diagrams $F_4^{(\ast,+)}$, $F_4^{(\ast,\ast)}$, $G_2^{(\ast,+)}$ and $G_2^{(\ast,\ast)}$ appear as diagrams of
extended affine root systems (see \cite{FeST12b,Sa}).
\end{enumerate}
\end{remark}

\subsection{Cluster algebras}\label{subsec:cluster algebras}

We recall the definition of cluster algebras for the special case, that is, cluster algebras with principal coefficients \cite{FZ07}. We only study this case and refer to \cite{FZ02,FZ07} for a general definition of cluster algebras. Before giving the definition, we prepare some notations. Let $\cF$ be the field of rational functions in $2n$ variables over $\bQ$.

\begin{definition}\label{def:seed mutation}
\begin{enumerate}
\item A \emph{seed with coefficients} is a pair $(\bx,C)$ consisting of the following data:
\begin{enumerate}
\item $\bx=(x_1,\ldots,x_n,y_1=x_{n+1},\ldots,y_n=x_{2n})$ is a free generating set of $\cF$ over $\bQ$.
\item $C=(c_{ij})_{i \in I, 1\le j \le n}$ is a $2n \times n$ integer matrix whose upper part $(c_{ij})_{1 \le i,j \le n}$ is skew-symmetrizable, where $I:=\{1,\ldots,2n\}$.
\end{enumerate}
Then we refer to $\bx$ as the \emph{cluster}, to each $x_i$ ($1 \le i \le n$) as a \emph{cluster variable} and $y_i$ as a \emph{coefficient}.
\item For a seed $(\bx,C)$ with coefficients, the \emph{mutation $\mu_k(\bx,C)=((x'_1,\ldots,x'_n,y_1,\ldots,y_n),\mu_k C)$ at $k$} $(1 \le k \le n)$ is defined by $x'_i = x_i$ if $i \neq k$, and
\[
x_k x'_k = \prod_{i \in I}x_i^{[c_{ik}]_+}+\prod_{i \in I}x_i^{[-c_{ik}]_+}.
\]
\end{enumerate}
\end{definition}

Note that $\mu_k$ is an involution and $\mu_k(\bx,C)$ is also a seed with coefficients. Moreover, the coefficients are not changed by mutations.

The \emph{principal extension of $B$} is the skew-symmetrizable $2n \times n$ matrix $\hat{B}$ whose upper part is $B$ and whose lower part is the identity matrix of rank $n$. We fix a seed $(\bx=(x_1,\ldots,x_n,y_1,\ldots,y_n),\hat{B})$ with coefficients, called the \emph{initial seed}. We also call each $x_i$ the \emph{initial cluster variable}. We are ready to define cluster algebras with principal coefficients.

\begin{definition}
The \emph{cluster algebra $\cA(B)=\cA(\bx,\hat{B})$ with principal coefficients} for the initial seed $(\bx,\hat{B})$ is a $\bZ[y_1^{\pm},\ldots,y_n^{\pm}]$-subalgebra of $\cF$ generated by the cluster variables obtained from $(\bx,\hat{B})$ by all sequences of mutations.
\end{definition}

We say that the cluster algebra $\cA(B)$ is \emph{finite type} (resp., \emph{affine type}, \emph{type $\Gamma$} for a diagram $\Gamma$, connected) if so is $B$.

One of the remarkable properties of cluster algebras with principal coefficients is the (strongly) \emph{Laurent phenomenon}.

\begin{theorem}\cite{FZ02,FZ07}\label{thm:Laurent phenomenon}
Any cluster variable $x$ is expressed by a Laurent polynomial of the initial cluster variables $x_1,\ldots,x_n$ and coefficients $y_1,\ldots,y_n$ with form
\[
x=\frac{f(x_1,\ldots,x_n,y_1,\ldots,y_n)}{x_1^{d_1} \cdots x_n^{d_n}},
\]
where $f(x_1,\ldots,x_n,y_1,\ldots,y_n) \in \bZ[x_1,\ldots,x_n,y_1,\ldots,y_n]$ and $d_i \in \bZ_{\ge 0}$.
\end{theorem}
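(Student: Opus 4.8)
The plan is to prove the assertion by induction on the number of mutations needed to reach $x$ from the initial seed, using the Caterpillar Lemma of Fomin and Zelevinsky as the inductive engine. First I would fix the ambient ring $R := \bZ[x_1^{\pm 1}, \ldots, x_n^{\pm 1}, y_1, \ldots, y_n]$, in which the initial cluster variables $x_i$ are inverted but the coefficients $y_i$ are kept as honest polynomial variables, and reformulate the statement as the single claim that every cluster variable lies in $R$. This reformulation simultaneously encodes both the Laurent property and the refinement that the denominator is a monomial in the $x_i$ only: membership in $R$ says precisely that $x = f/(x_1^{d_1} \cdots x_n^{d_n})$ with $f \in \bZ[x_1, \ldots, x_n, y_1, \ldots, y_n]$. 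Since $\bZ[y_1, \ldots, y_n]$ is a unique factorization domain, so is $R$, and this is the feature that the divisibility arguments below will exploit.

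The base case is immediate, and a single mutation is already transparent: the exchange relation gives $x_k' = \bigl(\prod_{i \in I} x_i^{[c_{ik}]_+} + \prod_{i \in I} x_i^{[-c_{ik}]_+}\bigr)/x_k$, and since the frozen entries $x_{i'} = y_i$ enter the two monomials only through the nonnegative exponents $[c_{i'k}]_+$ and $[-c_{i'k}]_+$, the right-hand side lies in $R$ with the $y_i$ confined to the numerator. For the inductive step the difficulty is that after a mutation the new cluster differs from the initial one, so induction cannot be applied directly: expressing a cluster variable $z$ that is Laurent at a neighbor of the initial seed and substituting $x_k' = (\text{binomial})/x_k$ reintroduces the non-invertible element $x_k'$ into a denominator. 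The Caterpillar Lemma circumvents this by treating a local configuration of seeds joined by mutations in two directions $k$ and $\ell$, and showing that if $z$ lies in the Laurent ring of each of the several clusters adjacent in this configuration — schematically, the clusters reached from the initial seed through the two relevant directions — then it already lies in $R$. Concretely I would assemble these Laurent expressions, clear denominators, and show that the resulting numerator is divisible in $R$ by the monomial that must be cancelled.

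The crux, and the step I expect to be the main obstacle, is the coprimality input that drives this divisibility. One must verify, inside the UFD $R$, that the two binomial exchange polynomials attached to adjacent exchange relations are coprime to each other and coprime to the cluster variables they are divided by. This is exactly where the combinatorics of the mutation rule is used: for each $i$ at most one of $[c_{ik}]_+$ and $[-c_{ik}]_+$ is positive, so the two monomials of an exchange binomial share no variable and the binomial is therefore not divisible by any $x_i$; moreover $c_{kk}=0$ forces $x_k$ itself to be absent from its own binomial. Granting these coprimalities, unique factorization forces the numerator to absorb the offending denominator, so the Laurent property propagates one mutation further and the induction closes. Finally, the strongly Laurent refinement needs no separate argument: because mutations are only ever performed at indices $k \le n$, the frozen rows indexed by $i'$ are never used as a mutation direction, so $y_i = x_{i'}$ occurs throughout the induction with nonnegative exponents only and can never be driven into a denominator — equivalently, the entire argument takes place inside $R$, where the $y_i$ are not invertible by construction.
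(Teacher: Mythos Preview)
The paper does not give its own proof of this theorem: it is stated with attribution to \cite{FZ02,FZ07} and used as a black box, so there is nothing in the paper to compare your argument against. Your sketch is a faithful outline of the original Fomin--Zelevinsky proof via the Caterpillar Lemma, and the key points (working in the UFD $R=\bZ[x_1^{\pm1},\ldots,x_n^{\pm1},y_1,\ldots,y_n]$, the coprimality of adjacent exchange binomials coming from the sign pattern of the $c_{ik}$, and the observation that frozen indices are never mutated so the $y_i$ stay in the numerator) are all correctly identified; the only caveat is that your description of the caterpillar configuration is schematic rather than precise, so as written it is a correct proof plan rather than a complete proof.
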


\begin{example}\label{exam:cluster algebras}
We consider a type $B_2$ matrix $B_{2,1}$. The mutation of the initial seed $(\bx,\hat{B}_{2,1})$ at $1$ is given by
\[
\left((x_1,x_2,y_1,y_2), \begin{bmatrix}0 & 1 \\ -2 & 0 \\ 1 & 0 \\ 0 & 1 \end{bmatrix}\right) \mapsto 
\left(\left(\frac{x_2^2+y_1}{x_1},x_2,y_1,y_2\right), \begin{bmatrix}0 & -1 \\ 2 & 0 \\ -1 & 1 \\ 0 & 1 \end{bmatrix}\right).
\]
Repeating mutations, we get the cluster algebra
{\setlength\arraycolsep{0mm}
\begin{eqnarray*}
\cA(B_{2,1})=\bZ\biggl[&&y_1^{\pm},y_2^{\pm},x_1,x_2,\cfrac{x_2^2+y_1}{x_1},\cfrac{x_2^2+y_1y_2x_1+y_1}{x_1x_2},\\
&&\cfrac{y_1y_2^2x_1^2+x_2^2+2y_1y_2x_1+y_1}{x_1x_2^2},\cfrac{y_2x_1+1}{x_2}\biggr].
\end{eqnarray*}}
\end{example}

The definition of cluster algebras naturally induces an isomophism $\cA(B) \simeq \cA(B') \otimes_{\bZ} \cA(B'')$ for skew-symmetrizable matrices $B$, $B'$ and $B''$ with $\Gamma(B)=\Gamma(B') \sqcup \Gamma(B'')$.

In the rest of this section, we assume that $B$ is connected.

\subsection{$g$-vector fans of cluster algebras}\label{subsec:g-vector fans}

Laurent phenomenon (Theorem \ref{thm:Laurent phenomenon}) means that $\cA(B)$ is contained in $\bZ[x_1^{\pm 1},\ldots,x_n^{\pm 1},y_1,\ldots,y_n]$. We consider its $\bZ^n$-grading given by
\[
 \deg(x_i)=\be_i,\ \ \deg(y_j)=\sum_{i=1}^n -b_{ij}\be_i,
\]
where $\be_1,\ldots,\be_n$ are the standard basis vectors in $\bZ^n$.

\begin{proposition}\cite[Proposition 6.1]{FZ07}
Every cluster variable $x$ of $\cA(B)$ is homogeneous with respect to the $\bZ^n$-grading $\deg$.
\end{proposition}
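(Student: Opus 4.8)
The plan is to prove the claim by induction on the number of mutation steps needed to reach a given cluster variable from the initial seed, using the exchange relation to propagate homogeneity. First I would observe that the statement is trivially true for the initial cluster variables: $\deg(x_i) = \be_i$ by definition, so each $x_i$ is homogeneous of degree $\be_i$. Thus the base case is settled, and it suffices to show that if all cluster variables in a seed $(\bx, C) = \mu_{k_\ell} \cdots \mu_{k_1}(\bx, \hat B)$ are homogeneous, then so is the single new cluster variable $x'_k$ produced by a further mutation $\mu_k$.

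The key step is to analyze the exchange relation
\[
x_k x'_k = \prod_{i \in I} x_i^{[c_{ik}]_+} + \prod_{i \in I} x_i^{[-c_{ik}]_+},
\]
where $I = \{1,\ldots,n,1',\ldots,n'\}$, $x_{i'} = y_i$, and the $y_i$ are the fixed coefficients (which do not change under mutation). Since $\cA(B) \subseteq \bZ[x_1^{\pm 1},\ldots,x_n^{\pm1},y_1,\ldots,y_n]$ is $\bZ^n$-graded with $\deg(x_i) = \be_i$ and $\deg(y_j) = \sum_{i=1}^n -b_{ij}\be_i$, it is enough to check that the two monomials on the right-hand side have the same degree; call it $\gamma$. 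Then $x_k x'_k$ is homogeneous of degree $\gamma$, and since $x_k$ is homogeneous by the inductive hypothesis with some degree $g_k$, the Laurent polynomial $x'_k = (x_k x'_k)/x_k$ — which by Theorem \ref{thm:Laurent phenomenon} lies in the graded ring — must be homogeneous of degree $\gamma - g_k$. Homogeneity of $x_k$ uses the inductive hypothesis applied to the seed containing $x_k$, not to the initial seed, so I should be a little careful: I would phrase the induction as ``every cluster variable appearing in any seed obtained by $\le \ell$ mutations is homogeneous,'' which makes the hypothesis directly available for $x_k$.

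The one genuine computation, and the point I would expect to be the main obstacle, is verifying that the two monomials $\prod_i x_i^{[c_{ik}]_+}$ and $\prod_i x_i^{[-c_{ik}]_+}$ have equal $\bZ^n$-degree. Writing $d_j = \deg$ of the $j$-th cluster variable $x_j$ in the current seed and using $\deg(y_j) = \sum_i -b_{ij}\be_i$ (note: the $b_{ij}$ here are the entries of the fixed initial matrix $B$, since the grading is fixed once and for all), the required identity is
\[
\sum_{j=1}^n [c_{jk}]_+ \, d_j + \sum_{j=1}^n [c_{j'k}]_+ \deg(y_j) \;=\; \sum_{j=1}^n [-c_{jk}]_+ \, d_j + \sum_{j=1}^n [-c_{j'k}]_+ \deg(y_j),
\]
i.e. $\sum_j c_{jk} d_j = \sum_j c_{j'k} \sum_i b_{ij} \be_i$, using $[a]_+ - [-a]_+ = a$. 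This is precisely the statement that the $g$-vectors (the $d_j$) and the $c$-vectors (the bottom rows $c_{j'k}$ of the extended matrix $C$) are linked by the defining relation of $C$ — or, equivalently, that the column-vector identity $C^{\mathrm{top}}$-part contracted against the $g$-vectors reconstructs the correct degree. I would establish this identity by a second induction on mutation steps, checking it holds for the initial seed (where $c_{jk} = \hat b_{jk}$, $d_j = \be_j$, and the identity reads $\sum_j \hat b_{jk} \be_j = \be_k \cdot 0 + \sum_j \delta_{jk}\deg(y_j)$... ) and is preserved by a single mutation $\mu_k$; the inductive step is a direct but somewhat intricate manipulation of the matrix-mutation rule for $C$ together with the known mutation rule for $g$-vectors. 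In the interest of keeping the argument self-contained, I would actually prefer to cite the relation between $\deg(x'_k)$ and the exchange relation as it already appears implicitly in \cite{FZ07}, and present only the homogeneity bookkeeping in detail, flagging the monomial-degree identity as the crux and giving the inductive verification in a displayed computation.
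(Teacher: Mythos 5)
The paper does not prove this proposition; it simply cites \cite{FZ07}, and your argument is essentially the proof given there (Proposition 6.1 of \cite{FZ07} is deduced from the fact that the elements $\hat{y}_{k;t}=\prod_j y_j^{c_{j'k}}\prod_j x_j^{c_{jk}}$ have degree zero, which is exactly your monomial-degree identity $\sum_j c_{jk}d_j=\sum_j c_{j'k}\sum_i b_{ij}\be_i$). The argument is correct and the deferred inductive verification does go through: for the column $l\neq k$ of $\mu_k C$ the discrepancy collects into $c_{kl}\bigl(-\gamma+\sum_j[-c_{jk}]_+d_j-\sum_j[-c_{j'k}]_+\deg(y_j)\cdot(-1)\bigr)$, which vanishes precisely because $\gamma$ is the common degree of the two exchange monomials. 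Two small points to fix in a write-up: the two inductions cannot be run separately, since the identity at the mutated seed requires the degree $d_k'=\gamma-d_k$ of the new cluster variable, which is only available once homogeneity at that step has been established, so they must be carried along simultaneously; and your base-case parenthetical has a sign slip, as the right-hand side there should be $\sum_j\delta_{jk}\sum_i b_{ij}\be_i=\sum_i b_{ik}\be_i=-\deg(y_k)$, not $\sum_j\delta_{jk}\deg(y_j)$.
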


\begin{definition}
The degree $\deg(x)$ of a cluster variable $x$ is called the \emph{$g$-vector} of $x$. The cone spanned by the $g$-vectors of cluster variables in a cluster $\bx$ is called the \emph{$g$-vector cone} of $\bx$, that is, it is given by $\{\sum_{x \in \bx}a_x \deg(x) \mid a_x \in \bR_{\ge 0}\}$.
\end{definition}

\begin{remark}
For a seed  $((z_1,\ldots,z_n,y_1,\ldots,y_n),(c_{ij}))$ of $\cA(B)$, we denote by $z_k'$ the new cluster variable obtained by mutating it at $k$. Then the mutation rule of cluster variables as in Definition \ref{def:seed mutation}(2) gives the recurrence relation (see \cite[Proposition 6.6]{FZ07})
{\setlength\arraycolsep{0mm}
\begin{eqnarray*}
\deg(z_k')&=&-\deg(z_k)+\sum_{i=1}^n[c_{ik}]_+\deg(z_i)+\sum_{i=1}^n[c_{n+i,k}]_+\deg(y_i)\\
&=&-\deg(z_k)+\sum_{i=1}^n[-c_{ik}]_+\deg(z_i)+\sum_{i=1}^n[-c_{n+i,k}]_+\deg(y_i).
\end{eqnarray*}}
All $g$-vectors of $\cA(B)$ are uniquely determined by the initial condition $\deg(x_i)=\be_i$ for any $i$ and this recurrence relation.
\end{remark}

One of important properties of $g$-vectors, called sign-coherence property, is the following.

\begin{theorem}\cite[Theorem 5.11]{GHKK}\label{thm:sign-coherence}
For any cluster $(z_1,\ldots,z_n,$ $y_1,\ldots,y_n)$ and $i \in \{1,\ldots,n\}$, the $i$-th coordinates of the $g$-vectors $\deg(z_1),\ldots,\deg(z_n)$ are either all non-negative or all non-positive.
\end{theorem}

Theorem \ref{thm:sign-coherence} was firstly proved in \cite{DWZ10} for skew-symmetric cluster algebras (see also \cite{LS}). Also, we can obtain it via additive categorification of skew-symmetric cluster algebras (see Section \ref{sec:rep theory}). Nakanishi and Zelevinsky \cite{NZ} showed that many of conjectures in \cite{FZ07} follow from the sign-coherence property. The following is one of them.

\begin{theorem}\cite[Corollary 5.5]{GHKK},\cite[Proposition 4.2]{NZ}\label{thm:linearly independent}
The $g$-vectors of cluster variables in a cluster are linearly independent. Thus $g$-vector cones in $\cA(B)$ have dimension $n$.
\end{theorem}

All $g$-vector cones in a cluster algebra and their faces form a fan, which is the main subject in this paper.

\begin{theorem}\cite[Theorem 0.8]{GHKK}\label{thm:cluster g-vector fan}
There is a simplicial fan each of whose 
\begin{itemize}
\item ray is spanned by the $g$-vector of a cluster variable of $\cA(B)$;
\item maximal cone is the $g$-vector cone of a cluster of $\cA(B)$;
\end{itemize}
\end{theorem}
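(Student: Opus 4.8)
The statement to prove is Theorem \ref{thm:cluster g-vector fan}: that the $g$-vector cones of clusters of $\cA(B)$, together with their faces, form a simplicial fan whose rays are spanned by $g$-vectors of cluster variables and whose maximal cones are the $g$-vector cones of clusters.

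\textbf{Overview of the approach.} The plan is to derive this from the structure of the cluster scattering diagram of Gross--Hacking--Keel--Kontsevich together with the explicit description of $g$-vectors via the tropical sign-coherence of $c$-vectors. Concretely, one knows that $\cF(B)$ is the set of cones cut out by the walls of the cluster scattering diagram $\fD(B)$, more precisely that the closures of the chambers of $\fD(B)$ contained in the union of $g$-vector cones are exactly those cones; hence it suffices to check: (i) each $g$-vector cone is genuinely $n$-dimensional and simplicial; (ii) two distinct maximal $g$-vector cones meet only along a common face; (iii) every face of a maximal cone is again in the collection. Item (i) is immediate from Theorem \ref{thm:linearly independent}: the $n$ $g$-vectors attached to a cluster are linearly independent, so the cone they span is a simplicial cone of full dimension $n$, and its faces are spanned by subsets of those $g$-vectors.

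\textbf{Key steps in order.} First I would recall the sign-coherence of $C$-matrices (Gross--Hacking--Keel--Kontsevich, building on Derksen--Weyman--Zelevinsky and Nakanishi--Zelevinsky): for every seed reached from the initial seed the $c$-vectors are either all nonnegative or all nonpositive. Dually, the $G$-matrix of a seed (whose columns are the $g$-vectors of the cluster variables of that seed) is invertible over $\bZ$ with $\det = \pm 1$, and the transpose-inverse relation between $C$- and $G$-matrices holds. This gives at once that the $g$-vector cone of a cluster is unimodular simplicial. Second, I would invoke that the $g$-vectors, via the ``$g$-vector = tropical evaluation'' principle, identify each cluster $\bx$ with the chamber $C^+_{\bx}$ of $\fD(B)$ obtained by mutating the positive chamber $\bR^n_{\ge 0}$ along the corresponding mutation sequence; mutation of seeds corresponds to crossing a wall of $\fD(B)$, and distinct seeds with the same $g$-vector cone must coincide. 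Third, using that $\fD(B)$ is a genuine scattering diagram — its walls are codimension-one rational cones and the chambers they bound have disjoint interiors — one concludes that two $g$-vector cones of clusters have disjoint interiors and, when they share a wall, meet exactly along a face; iterating wall-crossing shows any two of the cones intersect in a common face. Fourth, the faces: every face of a maximal cone $C^+_{\bx}$ of codimension one is a wall of $\fD(B)$ shared with exactly one neighbouring cluster chamber, and lower-dimensional faces arise as intersections of such walls, so they are all in the fan; alternatively one argues purely combinatorially that a face spanned by a subset $S$ of the $g$-vectors of $\bx$ is the $g$-vector cone of the ``partial cluster'' and lies in the closure of the chamber, hence is a cone of the collection.

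\textbf{Main obstacle.} The substantive content — and the step I expect to be hardest — is establishing that the $g$-vector cones of clusters have pairwise disjoint interiors and that adjacency of clusters (mutation) matches adjacency of cones (sharing a facet): a priori two far-apart mutation sequences could produce overlapping cones. This is exactly what the consistency of the cluster scattering diagram $\fD(B)$ buys us, via the identification of cluster chambers with cones in $\fD(B)$ and the fact that $\fD(B)$ is a fan in the region swept out; so the real work is to cite and correctly apply the GHKK machinery (scattering diagrams, the cluster complex/chamber structure, sign-coherence) rather than to perform any new computation. Everything else — simpliciality, unimodularity, closure under faces — is then formal, following from Theorem \ref{thm:linearly independent} and the bookkeeping of $G$- and $C$-matrices.
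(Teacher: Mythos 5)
The paper does not prove this statement itself: it is imported directly from \cite{GHKK}, and the only related material in the paper is the scattering-diagram discussion of Subsection 2.6 (Theorems \ref{thm:g-vector cone=chamber} and \ref{prop:pull back}) together with Theorem \ref{thm:linearly independent}. Your outline is a faithful sketch of exactly that GHKK route --- simpliciality from linear independence of $g$-vectors, and the fan property (disjoint interiors, meeting along common faces) from the identification of $g$-vector cones with reachable chambers of the consistent scattering diagram $\fD(B)$ --- so it takes essentially the same approach the paper relies on, with the genuinely hard step correctly located in, and deferred to, the GHKK machinery.
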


\begin{definition}
The fan in Theorem \ref{thm:cluster g-vector fan} is called the \emph{$g$-vector fan of $\cA(B)$}, and we denote it by $\cF(B)$. We denote by $\overline{\cF(B)}$ the closure of the union of cones of $\cF(B)$.
\end{definition}

The notation of $g$-vector fans also appear in representation theory of algebras (Theorem \ref{thm:silting-gvector-fan} and Definition \ref{def:silting-gvector-fan}). Via categorification, the $g$-vector fan of a cluster algebra coincides with a subfan of the $g$-vector fan of the corresponding algebra (Theorem \ref{thm:categorification}).

\begin{example}\label{exam:rank 2 g-vector fan}
Example \ref{exam:cluster algebras} gives the $g$-vector fan of $\cA(B_{2,1})$ as in Figure \ref{fig:rank 2 g-vector fans}.
In fact, for rank $2$ cluster algebras, their $g$-vector fans are well-known. In particular, for $bc \ge 4$, $\cF(B_{b,c})$ contains infinitely many rays converging to the rays $r_{\pm}$ of slope $(-bc \pm \sqrt{bc(bc-4)})/2c$. Moreover, if $bc=4$, then $r_+=r_-$. If $bc > 4$, then $r_+ \neq r_-$ and the interior of the cone spanned by $r_+$ and $r_-$ is the complement of $\overline{\cF(B_{b,c})}$ (see e.g. \cite[Example 1.15]{GHKK}). For example, $\cF(B_{4,1})$ and $\cF(B_{5,1})$ are given as in Figure \ref{fig:rank 2 g-vector fans}.
\end{example}
\begin{figure}[htp]
\centering
\begin{tabular}{ccc}
\begin{tikzpicture}[baseline=-1mm,scale=1.5]
\coordinate(0)at(0,0); \coordinate(x)at(1,0); \coordinate(-x)at(-1,0); \coordinate(y)at(0,1); \coordinate(-y)at(0,-1);
\draw[->](-x)--(x); \draw[->](-y)--(y); \draw(-0.5,1)--(0)--(-1,1);
\end{tikzpicture}
&
\begin{tikzpicture}[baseline=-1mm,scale=1.5]
\coordinate(0)at(0,0); \coordinate(x)at(1,0); \coordinate(-x)at(-1,0); \coordinate(y)at(0,1); \coordinate(-y)at(0,-1);
\draw[->](-x)--(x); \draw[->](-y)--(y); \draw(-0.25,1)--(0)--(-1,1); \draw(-0.33,1)--(0)--(-0.75,1); \draw(-0.375,1)--(0)--(-0.66,1);
 \draw(-0.4,1)--(0)--(-0.6,1); \draw(-0.42,1)--(0)--(-0.57,1); \draw(-0.44,1)--(0)--(-0.55,1); \draw(-0.46,1)--(0)--(-0.53,1);
\draw[red](0)--(-0.5,1); \node at(-0.5,1.15){$r_-=r_+$};
\end{tikzpicture}
&
\begin{tikzpicture}[baseline=-1mm,scale=1.5]
\coordinate(0)at(0,0); \coordinate(x)at(1,0); \coordinate(-x)at(-1,0); \coordinate(y)at(0,1); \coordinate(-y)at(0,-1);
\draw[->](-x)--(x); \draw[->](-y)--(y); \draw(-0.2,1)--(0)--(-1,1); \draw(-0.25,1)--(0)--(-0.8,1);
\draw(-0.267,1)--(0)--(-0.75,1); \draw(-0.2727,1)--(0)--(-0.733,1);
\draw[red](-0.2764,1)--(0)--(-0.7236,1); \node at(-0.7236,1.15){$r_-$}; \node at(-0.2764,1.15){$r_+$};
\end{tikzpicture}\\
$\cF(B_{2,1})$ & $\cF(B_{4,1})$ & $\cF(B_{5,1})$
\end{tabular}
   \caption{Examples of the $g$-vector fans $\cF(B_{b,c})$}
   \label{fig:rank 2 g-vector fans}
\end{figure}

Finally, we recall the following transition rule, which was conjectured in \cite[Conjecture 7.12]{FZ07}.

\begin{theorem}\cite[Corollary 5.5]{GHKK}\cite[Proposition 4.2]{NZ}\label{thm:linear transformation}
For $k \in \{1,\ldots,n\}$, $\cF(\mu_k B)$ is obtained from $\cF(B)$ by the map $(g_i)_{1 \le i \le n} \mapsto (g_i')_{1 \le i \le n}$, where
\[
g_i' = \left\{\begin{array}{ll}
 -g_k & \mbox{if} \ \ i=k,\\
 g_i + [b_{ik}]_+g_k - b_{ik}\min(g_k,0) & \mbox{otherwise}.
\end{array} \right.
\]
\end{theorem}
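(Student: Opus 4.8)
The plan is to deduce Theorem \ref{thm:linear transformation} from the fact that $\cA(B)$ and $\cA(\mu_k B)$ are two realizations of one and the same abstract cluster pattern, differing only in the choice of root seed. Since $\mu_k$ is an involution, the seeds reachable from the initial seed of $\cA(\mu_k B)$ are exactly those reachable from the initial seed of $\cA(B)$, and the two roots are joined by the single edge labelled $k$. Consequently the maximal cones of $\cF(B)$ and of $\cF(\mu_k B)$ are indexed by the same set of seeds, and the theorem reduces to the following pointwise claim: for every cluster variable, its $g$-vector computed from the root of $\cA(\mu_k B)$ is the image under the displayed map $\Phi$ of its $g$-vector computed from the root of $\cA(B)$. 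It is convenient to record that $\Phi$ is piecewise-linear, with $g_k' = -g_k$ throughout and two linear pieces
\[
g_i' = g_i + [b_{ik}]_+\, g_k \quad (g_k \ge 0), \qquad g_i' = g_i + [-b_{ik}]_+\, g_k \quad (g_k \le 0);
\]
each piece is given by a unimodular integer matrix, and the two agree on the hyperplane $\{g_k = 0\}$.

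The heart of the matter is this pointwise claim, which is exactly the change-of-initial-seed rule conjectured in \cite[Conjecture 7.12]{FZ07} and proved in \cite{GHKK,NZ}. I would establish it by induction along the $n$-regular tree of seeds. Sign-coherence of the $c$-vectors (\cite{GHKK}) guarantees that the tropical sign $\epsilon$ of $g_k$ is well defined at each seed, so that $\Phi$ is unambiguous. The base case compares the initial cluster of $\cA(B)$ with the same data read off from the root of $\cA(\mu_k B)$, and is a direct computation with the grading $\deg$ of Subsection \ref{subsec:g-vector fans}. For the inductive step one compares the forward mutation recursion for $g$-vectors with the map $\Phi$; this is a linear-algebra identity whose two cases match the two linear pieces of $\Phi$ according to the sign $\epsilon$. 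Conceptually the cleanest route is the tropical-duality method of \cite{NZ}, in which the rule for $g$-vectors is transported from the transparent transformation of the sign-coherent $c$-vectors under the single root mutation at $k$.

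To upgrade the pointwise correspondence to the claimed equality of fans, the key structural input is that the rows of every $G$-matrix are sign-coherent; equivalently, for each $k$ the $k$-th coordinates of the $g$-vectors lying in a common cluster all have the same sign, so that no maximal cone of $\cF(B)$ crosses the hyperplane $\{g_k = 0\}$. Granting this, $\Phi$ restricts to one of the two unimodular linear pieces on each maximal cone, hence carries that cone isomorphically onto the cone spanned by the transformed $g$-vectors, which by the previous paragraph is exactly the corresponding maximal cone of $\cF(\mu_k B)$. Since $\Phi$ is a homeomorphism of $\bR^n$ that is linear on each cone, it identifies faces with faces and rays with rays (using linear independence, Theorem \ref{thm:linearly independent}, and the simplicial structure, Theorem \ref{thm:cluster g-vector fan}); therefore it maps $\cF(B)$ bijectively onto $\cF(\mu_k B)$, which is the assertion.

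The main obstacle is the pointwise change-of-initial-seed identity of the second paragraph: it is the deep input, and it rests on sign-coherence, which is not formal. A second, more subtle point is that $\Phi$ is only \emph{piecewise}-linear, so the reduction in the third paragraph genuinely requires that maximal cones do not straddle $\{g_k = 0\}$ — that is, row sign-coherence of the $G$-matrices — for otherwise $\Phi$ would fold a cone and its image would fail to be a cone. Once both facts are in hand, the remaining verifications (unimodularity of the two pieces and their compatibility on shared faces) are routine.
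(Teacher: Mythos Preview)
The paper does not supply its own proof of Theorem \ref{thm:linear transformation}; it records the statement as a consequence of \cite{GHKK} and \cite{NZ} and uses it only as a black box (notably in the one-line proof of Proposition \ref{prop:preserve denseness}). There is therefore nothing in the paper to compare your argument against.

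Assessed on its own merits, your outline is an accurate account of how the cited references establish the result. The pointwise change-of-initial-seed identity is exactly \cite[Conjecture 7.12]{FZ07}, proved in \cite{NZ} conditionally on sign-coherence of $c$-vectors and made unconditional by \cite{GHKK}. Your computation of the two linear pieces of $\Phi$ on the half-spaces $\{g_k \ge 0\}$ and $\{g_k \le 0\}$ is correct, and you are right that the upgrade from the pointwise statement to an identification of fans requires precisely row sign-coherence of the $G$-matrix, which by the tropical duality of \cite{NZ} is equivalent to column sign-coherence of the $C$-matrix. One small point of phrasing: in your inductive step you speak of ``the tropical sign $\epsilon$ of $g_k$'' being well defined at each seed; strictly, what sign-coherence gives is that the $k$-th \emph{row} of the $G$-matrix at each seed has a definite sign, so that every maximal cone lies in one closed half-space and $\Phi$ is linear on it. With that adjustment, the sketch is sound, and you have correctly isolated where the depth lies.
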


\subsection{Cluster algebras with dense $g$-vector fans}\label{subsec:dense g-vector fans}

The $g$-vector fans of finite type cluster algebras have several nice properties. We focus on a basic property (see e.g. \cite[Theorem 10.6]{R} and proof of \cite[Proposition 4.9]{A}).

\begin{theorem}\cite{A,R}\label{thm:finite type complete}
The matrix $B$ is finite type if and only if the $g$-vector fan $\cF(B)$ is complete, that is, the union of its cones covers $\bR^n$.
\end{theorem}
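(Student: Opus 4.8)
\textbf{Proof proposal for Theorem~\ref{thm:finite type complete}.}

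The plan is to reduce the statement to a known structural fact about cluster algebras of finite type: when $B$ is finite type, the associated cluster algebra has only finitely many cluster variables (equivalently, finitely many seeds), so $\cF(B)$ consists of finitely many maximal cones, each of full dimension $n$ by Theorem~\ref{thm:linearly independent}. The goal is therefore to show that this finite simplicial fan actually covers $\bR^n$. The cleanest route I would take exploits the connection between $g$-vectors and the $c$-vectors / $\mathbf{d}$-vectors: in finite type the $g$-vector fan is the normal fan of the generalized associahedron (as recalled in the introduction via \cite{CFZ,HPS}), and a polytope's normal fan is always complete. If one wants a self-contained argument, I would instead argue by induction on $n$ together with the mutation structure.

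First, I would set up the inductive machinery. For $n=1$, $\cF(B)$ consists of the two rays $\bR_{\ge 0}\be_1$ and $\bR_{\le 0}\be_1$, which cover $\bR$. For the inductive step, fix a cluster variable $x_j$ whose $g$-vector is $\be_j$ (the initial cluster variables), and consider the ``star'' of the ray $\bR_{\ge 0}\be_j$, i.e.\ the union of all maximal cones of $\cF(B)$ containing $\be_j$. The link of this ray is governed by the $g$-vector fan of the rank $(n-1)$ cluster algebra obtained by ``freezing'' the vertex $j$ (deleting row and column $j$ from $B$), which is again finite type since any full subquiver/subdiagram of a Dynkin diagram is a disjoint union of Dynkin diagrams; by the induction hypothesis that fan is complete in $\bR^{n-1}$. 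This shows that the star of $\bR_{\ge 0}\be_j$ is exactly a half-space's worth of cones, namely $\{v \in \bR^n : \langle v, \be_j^\ast\rangle \ge 0\}$ in suitable coordinates. Applying Theorem~\ref{thm:linear transformation}, the mutation $\mu_j$ carries $\cF(B)$ to $\cF(\mu_j B)$ by a piecewise-linear map that is linear (and unimodular) on each half-space cut out by the hyperplane $g_j = 0$; since $\mu_j B$ is again finite type, and the two half-spaces $g_j \ge 0$ and $g_j \le 0$ between them cover $\bR^n$, completeness of $\cF(B)$ on $\{g_j \ge 0\}$ together with completeness of $\cF(\mu_j B)$ transported back along $\mu_j^{-1}$ on $\{g_j \le 0\}$ yields completeness of $\cF(B)$.

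The main obstacle, and the step requiring the most care, is controlling the ``other side'' of the hyperplane $g_j = 0$: a priori the induction only tells us the fan is complete in the closed half-space $\{g_j \ge 0\}$, and one must propagate this around. The key input that makes this work is that the collection of maximal cones is closed under the operations ``apply $\mu_k$ to a seed'' and that by Theorem~\ref{thm:linear transformation} each such operation is a bijection of the maximal cones of $\cF(B)$ with those of $\cF(\mu_k B)$ respecting the hyperplane subdivision. Since in finite type there are only finitely many matrices mutation-equivalent to $B$ and finitely many seeds, a finite exhaustion argument closes the loop: starting from the half-space covered near the initial seed and mutating, one eventually sweeps out all of $\bR^n$ with no gaps, because a gap would persist under all mutations and hence contradict the known count of maximal cones (which equals the number of clusters, a Catalan-type number matching the number of facets of the generalized associahedron). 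In practice the slickest write-up simply cites \cite[Theorem~10.6]{R} or the polytopality of the generalized associahedron; I would present the normal-fan argument as the primary proof and remark that the inductive argument above is an alternative.
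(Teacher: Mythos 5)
The paper does not actually prove this statement: it is imported verbatim as Theorem 10.6 of \cite{R}, whose proof goes through Cambrian fans and the polytopal realization of the generalized associahedron. Your primary route --- identifying $\cF(B)$ with the normal fan of a generalized associahedron and invoking completeness of normal fans --- is therefore legitimate and in the same spirit as the cited source, but be aware that it is a proof by citation of a result (polytopality realizing $\cF(B)$ as a normal fan, as in \cite{CFZ,HPS}) that is at least as deep as the completeness statement itself; it does not add anything beyond what the paper already does by citing \cite{R}.

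Your ``self-contained'' inductive alternative, however, has a genuine gap at its central step. You claim that the star of the ray $\bR_{\ge 0}\be_j$ (the union of the maximal cones containing $\be_j$) is the half-space $\{g_j \ge 0\}$ once the link is known to be complete. This is false, and completeness of the link cannot imply it: the link being complete only shows that the star is a full-dimensional cone neighborhood of the open ray, not a half-space. Type $A_2$ is already a counterexample: the fan has five rays arranged cyclically, so the two maximal cones containing a given initial $g$-vector $\be_j$ have extreme rays that are not antipodal, and their union is a proper convex cone, strictly smaller than any half-space. Since the subsequent gluing step (covering $\{g_j \le 0\}$ by transporting the analogous region of $\cF(\mu_j B)$ through the piecewise-linear map of Theorem \ref{thm:linear transformation}) relies on this half-space description on both sides --- and note that $\mu_j B$ has the same rank $n$, so the induction hypothesis does not apply to it either --- the argument does not close. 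The final ``finite exhaustion'' paragraph is not a proof: ``a gap would persist under all mutations'' is an assertion, and the appeal to the count of maximal cones matching the facets of the generalized associahedron silently reintroduces the polytopality theorem you were trying to avoid. If you want a genuinely self-contained argument, the missing ingredient is a correct description of which cones lie on each side of the hyperplane $\{g_j = 0\}$ (this is where sign-coherence and the actual combinatorics of finite type enter), and that is precisely the content one ends up re-proving from \cite{R} or \cite{GHKK}.
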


This naturally leads the following definition in a general setting.

\begin{definition}
We say that the $g$-vector fan $\cF(B)$ is \emph{dense} if the union of its cones is dense in $\bR^n$.
\end{definition}

By Example \ref{exam:rank 2 g-vector fan}, we know that $\cF(B_{b,c})$ is dense (resp., complete) if and only if $bc \le 4$ (resp., $bc < 4$).

The denseness of $g$-vector fans is preserved by mutations.

\begin{proposition}\label{prop:preserve denseness}
If $\cF(B)$ is dense, then so is $\cF(\mu_k B)$.
\end{proposition}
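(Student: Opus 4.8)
The plan is to exploit the explicit linear transformation rule of Theorem \ref{thm:linear transformation}, which says that $\cF(\mu_k B)$ is the image of $\cF(B)$ under the piecewise-linear map $\phi_k \colon \bR^n \to \bR^n$ defined by $g_i \mapsto g_i$ for $i \neq k$ (after the correction for $g_k$) and $g_k \mapsto -g_k$, with the sign-dependent terms as in the statement. First I would observe that $\phi_k$ is a bijection of $\bR^n$: it is linear on each of the two closed half-spaces $\{g_k \ge 0\}$ and $\{g_k \le 0\}$, each piece being given by a matrix of determinant $-1$ (the diagonal entry in row $k$ is $-1$ and the matrix is otherwise unitriangular up to the column-$k$ entries), and the two linear pieces agree on the hyperplane $\{g_k = 0\}$ since the terms $[b_{ik}]_+ g_k$ and $-b_{ik}\min(g_k,0)$ both vanish there. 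Hence $\phi_k$ is a homeomorphism of $\bR^n$ onto itself.

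The key point is then that a homeomorphism of $\bR^n$ takes dense subsets to dense subsets. Concretely, $\overline{\cF(\mu_k B)}$ is the closure of $\bigcup_{\sigma \in \cF(B)} \phi_k(\sigma) = \phi_k\bigl(\bigcup_{\sigma \in \cF(B)} \sigma\bigr)$, and since $\phi_k$ is a homeomorphism, $\phi_k$ of a dense set is dense; explicitly, $\overline{\phi_k(S)} = \phi_k(\overline{S}) = \phi_k(\bR^n) = \bR^n$ whenever $\overline{S} = \bR^n$. Therefore if $\cF(B)$ is dense, so is $\cF(\mu_k B)$.

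The only mild subtlety — and the one step worth spelling out carefully — is verifying that $\phi_k$ really is a well-defined homeomorphism, i.e. that the two linear formulas glue continuously along $\{g_k = 0\}$ and that each linear piece is invertible; both are immediate from the form of the transformation but deserve an explicit sentence. There is no serious obstacle here: once the homeomorphism property of $\phi_k$ is in hand, the statement follows formally from the topological fact that homeomorphisms preserve density. (One could alternatively phrase the whole argument in terms of the inverse map, noting that $\mu_k$ is an involution, so $\phi_k^{-1}$ is the corresponding transformation for $\mu_k B$, but this is not needed.)
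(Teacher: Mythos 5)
Your proof is correct and takes essentially the same approach as the paper, whose entire proof is the one-line observation that the mutation map on $g$-vectors is piecewise linear by Theorem \ref{thm:linear transformation}. You have simply made explicit the details the paper leaves implicit, namely that the two linear pieces have determinant $-1$, glue along $\{g_k=0\}$, and hence define a homeomorphism of $\bR^n$, which preserves density.
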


\begin{proof}
The assertion holds since the transformation of mutations for $g$-vectors is piecewise linear by Theorem \ref{thm:linear transformation}.
\end{proof}

Reading and Speyer \cite{RS} studied the $g$-vector fans $\cF(B)$ of affine type skew-symmetrizable matrix $B$ and explicitly described the complement of $\cF(B)$ as a cone of co-dimension one. In particular, this gives the denseness of $\cF(B)$.

\begin{theorem}\cite[Corollaries 1.3 and 4.9]{RS}\label{thm:affine type dense}
If $B$ is affine type, then the $g$-vector fan $\cF(B)$ is dense.
\end{theorem}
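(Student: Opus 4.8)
The plan is to prove the sharper statement that the complement $\bR^n \setminus \overline{\cF(B)}$ is contained in a single hyperplane; denseness is then immediate. By Proposition \ref{prop:preserve denseness} together with the definition of affine type, we may assume that $\Gamma(B)$ is an extended Dynkin diagram carrying an acyclic orientation (for the cyclic orientation of $A_n^{(1)}$ one extra mutation makes it acyclic). If $n=2$ then $\Gamma(B)=A_1^{(1)}$, i.e.\ $B=B_{2,2}$, and the claim follows from Example \ref{exam:rank 2 g-vector fan}. So assume $n \ge 3$ and let $H$ be the associated tame hereditary algebra: the path algebra $kQ$ of the quiver $Q=\Gamma(B)$ when $B$ is skew-symmetric, and a tame hereditary $k$-species otherwise.

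Via the additive categorification recalled in Section \ref{sec:rep theory}, under the identification $K_0(\proj H)\otimes_\bZ \bR \cong \bR^n$ the $g$-vector fan $\cF(B)$ is the fan of cluster-tilting objects of the cluster category $\cC_H$, and each of its codimension-one cones lies on a hyperplane $\beta^\perp=\{\theta : \theta\cdot\beta=0\}$, where $\beta$ is a positive real Schur root of $H$ and $\theta\cdot\beta$ is the natural pairing between $g$-vectors and dimension vectors (these $\beta$ are precisely the positive $c$-vectors of $\cA(B)$). I would use the following standard input from the theory of wall-and-chamber structures: a point $\theta \in \bR^n$ lies in $\overline{\cF(B)}$ whenever only finitely many of the hyperplanes $\beta^\perp$ meet some neighbourhood of $\theta$, since then that neighbourhood is split by finitely many hyperplanes into finitely many open polyhedral pieces, each contained in a maximal cone of $\cF(B)$.

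It then remains to bound the relevant roots $\beta$ near a point $\theta$ with $\theta\cdot\delta\neq 0$, where $\delta$ is the minimal positive imaginary root of the affine root system of $H$. Every positive real root is uniquely of the form $\beta=\overline\alpha+m\delta$ with $\overline\alpha$ in the finite root system $\overline\Phi$ and $m\in\bZ_{\ge 0}$; since $\overline\Phi$ is finite and $\theta\cdot(\overline\alpha+m\delta)=0$ forces $m=-(\theta\cdot\overline\alpha)/(\theta\cdot\delta)$, there are only finitely many $\beta$ with $\theta\in\beta^\perp$. Moreover, for fixed such $\theta$ the hyperplane $(\overline\alpha+m\delta)^\perp$ recedes from $\theta$ as $m\to\infty$ (as $\overline\alpha$ ranges over a finite set while $\theta\cdot\delta\neq 0$), so only finitely many hyperplanes $\beta^\perp$ meet a small enough ball around $\theta$. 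Hence $\theta\in\overline{\cF(B)}$, which gives $\bR^n\setminus\delta^\perp\subseteq\overline{\cF(B)}$; as $\delta^\perp$ is a hyperplane, $\cF(B)$ is dense.

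The main obstacle is the categorification dictionary invoked in the second paragraph — identifying $\cF(B)$ with the cluster-tilting fan and wall-and-chamber structure of $H$, recognizing the walls as real-Schur-root hyperplanes, and in particular justifying that near a point met by only finitely many walls the adjacent maximal cones cover a neighbourhood (equivalently, that $\overline{\cF(B)}$ omits only points where walls accumulate) — together with setting this up in the skew-symmetrizable case through species; the root-system estimate of the third paragraph is then elementary. Note that Reading and Speyer obtain the stronger conclusion that $\bR^n\setminus\overline{\cF(B)}$ is a single relatively open codimension-one cone inside $\delta^\perp$; recovering that would additionally require identifying exactly which subcone of $\delta^\perp$ is the accumulation locus of the hyperplanes $(\overline\alpha+m\delta)^\perp$ and checking that it is disjoint from $\overline{\cF(B)}$.
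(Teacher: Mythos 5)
The paper does not actually prove this statement: it is imported verbatim from Reading--Speyer \cite{RS}, who work with Cambrian frameworks and the combinatorics of sortable elements in affine Coxeter groups and, as the surrounding text explains, exhibit the complement of $\overline{\cF(B)}$ as a single cone of codimension one, uniformly in all affine types. Your proposal is therefore a genuinely different, representation-theoretic route. For the \emph{skew-symmetric} affine types it is essentially sound: after reducing by Proposition \ref{prop:preserve denseness} to an acyclic extended Dynkin quiver $Q$, Theorem \ref{thm:categorification} identifies $\cF(B)$ with $\cF(kQ)$; your ``standard input'' is the theorem that the chambers of the wall-and-chamber structure are exactly the interiors of the $g$-vector cones of $\tau$-tilting pairs (\cite{A,BST}); the bricks of a tame hereditary algebra have dimension vector a real Schur root or $\delta$, so the walls lie where you say; and your root-system estimate is correct, although the distances from $\theta$ to $(\overline{\alpha}+m\delta)^{\perp}$ do not recede to infinity but converge to ${\rm dist}(\theta,\delta^{\perp})>0$, which is all you need. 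This is close in spirit to the arguments of \cite{H} and \cite{PY} for tame algebras, and it buys a slightly weaker conclusion than \cite{RS} (complement contained in $\delta^{\perp}$ rather than equal to an explicit subcone of it). A small slip: $\Gamma(B)=A_1^{(1)}$ forces $bc=4$, not $B=B_{2,2}$; Example \ref{exam:rank 2 g-vector fan} still covers all these cases.

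The genuine gap is the non-skew-symmetric affine types $B_n^{(1)}$, $C_n^{(1)}$, $F_4^{(1)}$, $G_2^{(1)}$ (the non-symmetric matrices with diagram $A_1^{(1)}$, such as $B_{4,1}$, are rank two and already handled). Over the algebraically closed field $k$ fixed in Section \ref{sec:rep theory}, every finite-dimensional division $k$-algebra is $k$ itself, so there is no tame hereditary $k$-species realizing a non-symmetric Cartan datum; you would have to change the base field, and, more seriously, you would then need an analogue of Theorem \ref{thm:categorification} identifying the cluster-algebra fan $\cF(B)$ with the $\tau$-tilting $g$-vector fan of that species. The paper explicitly states that no such additive categorification is currently available for general skew-symmetrizable matrices, and this identification is precisely what your second paragraph assumes. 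As written, your argument therefore proves the theorem only in the simply-laced affine cases; to cover the remaining types you would need to import a species or group-species categorification with matching $g$-vectors, argue via unfoldings, or fall back on the purely combinatorial proof of \cite{RS} that the paper cites.
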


In general, it is not easy to exhibit the complement of a $g$-vector fan. However, it'll be possible to see the denseness to a certain extent. Skew-symmetric cluster algebras with dense $g$-vector fans were classified in \cite{PY,Se14} except for type $X_6$. In which case, we still don't know if the associated $g$-vector fan is dense, but we conjecture that it is dense (see \cite[Conjecture 1.3]{PY}).

\begin{theorem}\cite{PY,Se14}\label{thm:classify g-dense cluster}
Suppose that $B$ is skew-symmetric and not of type $X_6$.
Then the $g$-vector fan $\cF(B)$ is dense if and only if $B$ is mutation-finite and none of the following conditions are satisfied:
\begin{itemize}
\item $B_{b,b}$ for $b \in \bZ$ with $|b|>2$;
\item defined from a triangulated surface with exactly one puncture;
\item type $X_7$.
\end{itemize}
\end{theorem}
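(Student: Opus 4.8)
The plan is to reduce the statement to the classification of mutation-finite skew-symmetric matrices and then to settle denseness one family at a time. For the ``only if'' direction I would first note that if $\cF(B)$ is dense then $B$ is mutation-finite by Theorem~\ref{thm:mutation-infinite g-dense}, so by Theorem~\ref{thm:classify mutation-finite} specialized to the skew-symmetric case (which discards the non-skew-symmetric diagonal types $F_4^{(\ast,+)}$, $F_4^{(\ast,\ast)}$, $G_2^{(\ast,+)}$, $G_2^{(\ast,\ast)}$), $B$ is one of: rank $2$, that is, $B = B_{b,b}$; finite type; affine type; defined from a triangulated surface (the skew-symmetric members of the orbifold family, understood as a genuinely surface-type $B$ not already of finite or affine type); or one of the quivers $E_6^{(1,1)}$, $E_7^{(1,1)}$, $E_8^{(1,1)}$, $X_6$, $X_7$. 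Since $X_6$ is excluded by hypothesis, it then suffices to determine, within each remaining family, exactly which members have dense $g$-vector fan; the ``if'' direction follows from the same case analysis read in reverse.

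The three small families I would dispose of immediately. In rank $2$, skew-symmetry forces $B = B_{b,b}$, and Example~\ref{exam:rank 2 g-vector fan} shows that $\cF(B_{b,b})$ is dense precisely when $b^{2} \le 4$, i.e. $|b| \le 2$; thus exactly the matrices $B_{b,b}$ with $|b| > 2$ fail. In finite type, $\cF(B)$ is complete, hence dense, by Theorem~\ref{thm:finite type complete}. In affine type, $\cF(B)$ is dense by Theorem~\ref{thm:affine type dense}. What remains is the surface family, the tubular quivers $E_6^{(1,1)}$, $E_7^{(1,1)}$, $E_8^{(1,1)}$, and $X_7$.

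For these I would invoke the detailed analyses of \cite{PY} and \cite{Se14}. For surfaces the key input is the categorification by (tagged) arcs: the $g$-vectors of cluster variables of $\cA(B)$ are identified with the intersection (shear-coordinate) vectors of tagged arcs relative to a fixed triangulation, so that denseness of $\cF(B)$ becomes denseness of this set of vectors in $\bR^{n}$; \cite[Theorem~5.17]{PY} shows this holds for every surface except those with exactly one puncture, for which the two taggings available at the puncture force an explicit region of $\bR^{n}$ to be omitted, so $\cF(B)$ is not dense. For the tubular quivers $E_6^{(1,1)}$, $E_7^{(1,1)}$, $E_8^{(1,1)}$, \cite{PY} likewise establishes denseness, using the structure of the associated tubular category. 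For $X_7$, Seven \cite{Se14} exhibits an explicit linear form that is positive on every $g$-vector, so $\cF(B)$ lies in an open half-space of $\bR^{7}$ and is not dense. Combining all of this with the reduction to finite mutation type yields the claimed equivalence; $X_6$ is set aside, as its status remains open.

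The hard part will be the surface family, and to a lesser extent the tubular case: deciding denseness there is not a formality but rests on the geometric and representation-theoretic analysis of \cite{PY} --- controlling which intersection vectors of tagged arcs actually occur, proving density for surfaces with no puncture or with at least two punctures, and establishing the sharp failure for exactly one puncture. In a self-contained treatment this is by far the heaviest ingredient; everything else is bookkeeping against Theorem~\ref{thm:classify mutation-finite} together with the rank-$2$, finite-type and affine-type facts already recorded above.
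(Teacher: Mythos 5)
This theorem is quoted from \cite{PY} and \cite{Se14}, and the paper offers no proof of its own beyond the sketch in Remark~\ref{rem:categorification} (tameness of the associated Jacobian algebras combined with the $g$-tameness of tame algebras); your reduction via Theorem~\ref{thm:mutation-infinite g-dense} to the mutation-finite classification, followed by a family-by-family appeal to those same references, is exactly the intended route and each step you cite is correct. The only caveat is that the once-punctured exception in \cite[Theorem~5.17]{PY} concerns \emph{closed} surfaces (a once-punctured disk gives finite type $D_n$, whose fan is complete), a qualifier the paper's own statement also omits, so your reconstruction faithfully matches it.
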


\begin{proof}
 Except for type $X_7$, the assertion follows from \cite[Theorem 5.17]{PY}. Moreover, Seven \cite{Se14} showed that if $B$ is type $X_7$, then $\cF(B)$ is contained in some open half-space in $\bR^7$, in particular, it is not dense.
\end{proof}

\cite[Theorem 5.17]{PY} was proved via an additive categorification of skew-symmetric cluster algebras (see Section \ref{sec:rep theory}). It is an open problem to give a categorification of skew-symmetrizable cluster algebras.

\subsection{Infinite mutation type}\label{subsec:infinite mutation type}

The following property was used in \cite{PY} to prove Theorem \ref{thm:classify g-dense cluster} for skew-symmetric case. It is naturally extended to skew-symmetrizable case, and we will give its proof for the convenience of the reader.

\begin{theorem}\label{thm:mutation-infinite g-dense}
If $\cF(B)$ is dense, then $B$ is mutation-finite.
\end{theorem}

To prove Theorem \ref{thm:mutation-infinite g-dense}, we use a pull back of scattering diagrams. We refer to \cite{B,GHKK,M} for the details of scattering diagrams. Roughly speaking, a \emph{scattering diagram} is a set of \emph{walls}, where a wall is a cone of co-dimension one in $\bR^n$ together with some function. For the union of walls in a scattering diagram $\fD$, a connected component of its complement is called a \emph{chamber of $\fD$}. One can construct a scattering diagram $\fD(B)$ associated with $B$ and it relates to the cluster algebra $\cA(B)$. We only state their properties which we need in this paper.

\begin{theorem}\cite[Theorem 0.8]{GHKK}\label{thm:g-vector cone=chamber}
The interior of a $g$-vector cone in $\cA(B)$ is a chamber of $\fD(B)$.
\end{theorem}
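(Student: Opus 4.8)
The plan is to match each $g$-vector cone with a chamber of $\fD(B)$ by induction on the number of mutations needed to reach the corresponding seed, using the tropical mutation maps of Theorem \ref{thm:linear transformation} as the dictionary between the cluster structure and the scattering-diagram structure.

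First I would treat the base case, the initial seed. The initial scattering diagram is supported on the coordinate hyperplanes $\be_1^\perp,\ldots,\be_n^\perp$, and one checks that running the scattering construction adds no wall meeting the interior of the positive orthant $\cC^+_B := \bR^n_{\ge 0}$. Hence $\mathrm{int}(\cC^+_B)$ is a chamber of $\fD(B)$. Since the $g$-vectors of the initial cluster variables are $\be_1,\ldots,\be_n$ under the grading $\deg(x_i)=\be_i$, this chamber is exactly the interior of the initial $g$-vector cone.

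The inductive step rests on the mutation invariance of the cluster scattering diagram from \cite{GHKK}: for each $k$ the piecewise-linear automorphism $T_k$ of $\bR^n$ given by the formula of Theorem \ref{thm:linear transformation} satisfies $T_k(\fD(B))=\fD(\mu_k B)$, hence restricts to a bijection from chambers of $\fD(B)$ onto chambers of $\fD(\mu_k B)$. By Theorem \ref{thm:linear transformation} the same $T_k$ sends $\cF(B)$ to $\cF(\mu_k B)$, so it also carries $g$-vector cones to $g$-vector cones. Now the $g$-vector cone $\sigma_k$ of the once-mutated seed of $\cA(B)$ is exactly $T_k^{-1}(\cC^+_{\mu_k B})$, where $\cC^+_{\mu_k B}=\bR^n_{\ge 0}$ is the initial cone of $\cA(\mu_k B)$; indeed the initial seed of $\cA(\mu_k B)$ is the $k$-mutation of the initial seed of $\cA(B)$. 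Since $\cC^+_{\mu_k B}$ is a chamber of $\fD(\mu_k B)$ by the base case and $T_k^{-1}$ is chamber-preserving, $\mathrm{int}(\sigma_k)$ is a chamber of $\fD(B)$. Iterating along an arbitrary mutation sequence $\mu_{k_1},\ldots,\mu_{k_\ell}$, every $g$-vector cone of $\cA(B)$ is pulled back from a positive chamber through a composite of maps $T_{k_i}^{-1}$, and so its interior is a chamber of $\fD(B)$.

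I expect the main obstacle to be the mutation invariance $T_k(\fD(B))=\fD(\mu_k B)$ itself, which is the technical heart of the scattering-diagram theory. Establishing it requires tracking how each wall and its attached automorphism transforms under the piecewise-linear map $T_k$, verifying that the transformed collection of walls remains a consistent scattering diagram, and then invoking the uniqueness theorem for consistent scattering diagrams to identify it with $\fD(\mu_k B)$. Extra care is needed precisely because $T_k$ is only piecewise linear: the wall-crossing computations must be carried out separately on the two half-spaces $\{g_k\ge 0\}$ and $\{g_k\le 0\}$ on which $T_k$ is linear, with the initial wall $\be_k^\perp$ as the fold locus.
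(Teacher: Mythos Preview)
The paper does not give its own proof of this statement: it is quoted directly from \cite{GHKK} and used as a black box, so there is nothing in the paper to compare your argument against.

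That said, your outline is essentially the strategy used in \cite{GHKK} to establish the cluster-complex structure of $\fD(B)$: one first shows that the interior of the positive orthant is a chamber (no added wall in the consistent completion meets it), then proves the mutation invariance $T_k(\fD(B))\simeq\fD(\mu_k B)$ of cluster scattering diagrams (their Theorem~1.24, which you rightly single out as the technical heart), and finally inducts on the length of a mutation sequence. One small caveat: in \cite{GHKK} the identity $T_k(\fD(B))=\fD(\mu_k B)$ holds only up to equivalence of scattering diagrams, not literal equality of sets of walls; however, equivalent scattering diagrams have the same support and hence the same chambers, so your chamber-to-chamber argument goes through unchanged.
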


Theorem \ref{thm:g-vector cone=chamber} means that a mutation in $\cA(B)$ corresponds an adjacent pair of chambers $C$ and $C'$ of $\fD(B)$. We say that $C'$ is the \emph{mutation of $C$ at the wall $\overline{C} \cap \overline{C'}$}. Thus any $g$-vector cone in $\cA(B)$ is the closure of a chamber $C$ of $\fD(B)$ obtained from $C_0^{B}$ by a finite sequence of mutations, where $C_0^{B}$ is the interior of the cone spanned by $\be_1,\ldots,\be_n$,

The theorem below follows from the pull back of scattering diagrams given by Muller \cite[Theorem 33]{M} (see also the proof of \cite[Theorem 4.8]{CL}). For a subset $I \subset \{1,\ldots,n\}$, we consider a projection $\pi_I : \bR^n \rightarrow \bR^{|I|}$ given by $(r_i)_{1 \le i \le n} \mapsto (r_i)_{i \in I}$. We denote by $B_I$ the principal submatrix of $B$ indexed by $I$.

\begin{theorem}\cite{CL,M}\label{prop:pull back}
Let $\pi_I^{\ast}\fD(B_I)$ be a scattering diagram consisting of the walls $\pi_I^{-1}W$ for all walls $W$ of $\fD(B_I)$. Then each chamber of $\fD(B)$ is contained in some chamber of $\pi_I^{\ast}\fD(B_I)$.
\end{theorem}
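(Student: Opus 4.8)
The plan is to exhibit the statement as a direct application of Muller's pull-back construction for scattering diagrams \cite[Theorem 33]{M} (compare the proof of \cite[Theorem 4.8]{CL}), once the correct lattice map is identified, and then to read off the chamber containment from the resulting refinement of the chamber structures. Throughout, $\fD(B)$ denotes the consistent scattering diagram in $\bR^n$ attached to $B$: its skew-symmetric form is $\{\be_i,\be_j\} = d_i b_{ij}$ (skew-symmetric because $d_i b_{ij} = -d_j b_{ji}$), and its initial walls are the coordinate hyperplanes $\be_i^\perp$ for $1 \le i \le n$.

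The first step is to recognize $\fD(B_I)$ as living on a sublattice. Since $(B_I)_{jk} = b_{jk}$ for $j,k \in I$, the form attached to the principal submatrix $B_I$ is exactly $\{-,-\}$ restricted to the coordinate sublattice $\bigoplus_{i \in I}\bZ\be_i$, with symmetrizer $\diag(d_i)_{i\in I}$. This sublattice is a direct summand, hence saturated, so its inclusion into $\bZ^n$ is a form-compatible embedding, and the given coordinate projection $\pi_I : \bR^n \to \bR^{|I|}$, $(r_i) \mapsto (r_i)_{i\in I}$, is precisely the dual of this embedding.

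With this identification in hand, Muller's theorem applies. The pull-back of a wall $(W, f_W)$ of $\fD(B_I)$ is the codimension-one cone $\pi_I^{-1}W$, equipped with the wall function obtained by embedding $f_W$ (a series in the monomials indexed by the sublattice) into the structure algebra of $B$. Because the wall-crossing automorphisms depend only on the sublattice components, every path-ordered product around a codimension-two locus of $\pi_I^*\fD(B_I)$ is the image under $\pi_I$ of the corresponding product in $\fD(B_I)$, which is trivial by consistency of $\fD(B_I)$; hence $\pi_I^*\fD(B_I) = \{\pi_I^{-1}W : W \in \fD(B_I)\}$ is itself a consistent scattering diagram, which is the first assertion. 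The same machinery yields the refinement I need: the initial walls of $\pi_I^*\fD(B_I)$ are $\pi_I^{-1}(\{s_i = 0\}) = \be_i^\perp$ for $i \in I$, which are among the initial walls of $\fD(B)$, and all wall functions of $\pi_I^*\fD(B_I)$ are supported on the sublattice. By uniqueness of the consistent scattering diagram with prescribed initial data, the sublattice-supported part of $\fD(B)$ is then equivalent to $\pi_I^*\fD(B_I)$, so up to equivalence every wall of $\pi_I^*\fD(B_I)$ is a union of walls of $\fD(B)$. The main obstacle is precisely this refinement step: one must match the initial data and track the embedding of structure algebras carefully, which is the substance of Muller's theorem, so the task reduces to verifying its hypotheses (a saturated, form-compatible sublattice) and invoking it.

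The chamber containment is then immediate. Refinement means $\Supp \pi_I^*\fD(B_I) \subseteq \Supp \fD(B)$, so each connected component of $\bR^n \setminus \Supp \fD(B)$ lies entirely within a single connected component of $\bR^n \setminus \Supp \pi_I^*\fD(B_I)$; that is, each chamber of $\fD(B)$ is contained in some chamber of $\pi_I^*\fD(B_I)$.
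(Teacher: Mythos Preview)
Your proposal is correct and follows exactly the approach the paper indicates: the paper does not give a self-contained proof of this statement but simply records that it follows from Muller's pull-back construction \cite[Theorem~33]{M} (together with the treatment in the proof of \cite[Theorem~4.8]{CL} for the skew-symmetrizable setting). Your write-up fleshes out the verification of Muller's hypotheses---that the coordinate sublattice $\bigoplus_{i\in I}\bZ\be_i$ is saturated and form-compatible, with dual map $\pi_I$---and then reads off the chamber containment from the resulting support inclusion $\Supp\pi_I^{\ast}\fD(B_I)\subseteq\Supp\fD(B)$, which is precisely the intended argument.
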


This confirms Theorem \ref{thm:mutation-infinite g-dense} in a certain case.

\begin{lemma}\label{lem:rank2 not dense}
If there is $J \subset \{1,\ldots,n\}$ such that $B_J=B_{b,c}$ with $bc > 4$, then the $g$-vector fan $\cF(B)$ is not dense.
\end{lemma}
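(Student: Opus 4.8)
The plan is to reduce the general statement to the rank-$2$ situation via the pull-back of scattering diagrams (Theorem \ref{prop:pull back}), using the explicit description of the complement of $\overline{\cF(B_{b,c})}$ for $bc>4$ from Example \ref{exam:rank 2 g-vector fan}. First I would set $I:=J$, so that $B_I=B_{b,c}$ with $bc>4$, and consider the projection $\pi_I:\bR^n\to\bR^2$ together with the pull-back scattering diagram $\pi_I^{\ast}\fD(B_I)$. Since $bc>4$, the fan $\cF(B_{b,c})$ is \emph{not} dense: by Example \ref{exam:rank 2 g-vector fan} the interior $U$ of the cone spanned by the two limiting rays $r_+,r_-$ is a nonempty open subset of $\bR^2$ that meets no chamber of $\fD(B_{b,c})$ — indeed $U$ lies in the complement of the union of all the walls and chambers, as the chambers $C_i$ only converge to $\partial U$ and never enter $U$.

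Next I would pull this obstruction back. The open set $\pi_I^{-1}(U)\subset\bR^n$ is nonempty (it is a full-dimensional open cylinder over $U$). By Theorem \ref{prop:pull back}, every chamber of $\fD(B)$ is contained in some chamber of $\pi_I^{\ast}\fD(B_I)$; the chambers of $\pi_I^{\ast}\fD(B_I)$ are exactly the sets $\pi_I^{-1}(C)$ for chambers $C$ of $\fD(B_I)$, together with whatever is cut out by the pulled-back walls $\pi_I^{-1}W$. In particular $\pi_I^{-1}(U)$ contains no chamber of $\pi_I^{\ast}\fD(B_I)$, hence no chamber of $\fD(B)$, so it meets no chamber of $\fD(B)$ at all. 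By Theorem \ref{thm:g-vector cone=chamber}, the interior of every $g$-vector cone of $\cA(B)$ is a chamber of $\fD(B)$; therefore $\pi_I^{-1}(U)$ is disjoint from the interior of every maximal cone of $\cF(B)$, and since lower-dimensional cones contribute nothing to density, the open set $\pi_I^{-1}(U)$ is disjoint from the union $\bigcup_{\sigma\in\cF(B)}\sigma$ up to a measure-zero set. As $\pi_I^{-1}(U)$ is open and nonempty, this shows the union of cones of $\cF(B)$ is not dense in $\bR^n$, i.e. $\cF(B)$ is not dense.

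I expect the main obstacle to be making precise the claim that $U$ — and hence $\pi_I^{-1}(U)$ — genuinely avoids \emph{all} chambers (not just a cofinite family of them), and, relatedly, being careful that ``contained in some chamber of the pull-back'' together with ``$\pi_I^{-1}(U)$ is not contained in any such chamber'' really forces disjointness. The cleanest way around this is to argue contrapositively at the level of a single point: if some point $p$ in the interior of a $g$-vector cone of $\cA(B)$ had $\pi_I(p)\in U$, then by Theorem \ref{prop:pull back} the chamber of $\fD(B)$ containing $p$ sits inside a chamber $\pi_I^{-1}(C)$ of $\pi_I^{\ast}\fD(B_I)$, forcing $\pi_I(p)\in C$ for some chamber $C$ of $\fD(B_I)$ — contradicting $\pi_I(p)\in U$ and the fact that $U$ meets no chamber of $\fD(B_I)$. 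Thus $\pi_I^{-1}(U)$ lies outside the topological closure of $\bigcup_{\sigma\in\cF(B)}\sigma$, and since it is nonempty open, $\cF(B)$ is not dense. A secondary point worth a sentence is noting explicitly that $\pi_I^{-1}(U)$ is full-dimensional and open in $\bR^n$, so that avoiding it obstructs density and not merely completeness.
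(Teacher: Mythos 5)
Your reduction to rank $2$ via the pull-back $\pi_J^{\ast}\fD(B_J)$ and your choice of the target region $U=D$ (the interior of the cone spanned by $r_+$ and $r_-$) match the paper's strategy, but the step you yourself flag as the main obstacle is a genuine gap, and your proposed fix does not close it. Everything hinges on the claim that $U$ meets no chamber of $\fD(B_J)$. Example \ref{exam:rank 2 g-vector fan} only tells you that $U$ is the complement of $\overline{\cF(B_{b,c})}$, i.e.\ that $U$ avoids the closures of the \emph{reachable} chambers $C_i$ (those obtained from $C_0^{(2)}$ by finite sequences of mutations, whose closures are the $g$-vector cones). It says nothing about chambers of the scattering diagram that are not of this form: a priori $\fD(B_{b,c})$ could have full-dimensional chambers inside $U$, and ruling this out is the nontrivial ``Badlands'' phenomenon (density of walls in $U$ for $bc>4$), which is neither stated nor used anywhere in the paper. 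Your justification ``the chambers $C_i$ only converge to $\partial U$ and never enter $U$'' concerns only the $C_i$, so it is circular as a proof that \emph{all} chambers avoid $U$, and your single-point contrapositive still invokes exactly this unproven claim.

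The paper sidesteps the issue with a reachability argument that is missing from your proposal. A $g$-vector cone of $\cA(B)$ is the closure of a chamber of $\fD(B)$ obtained from $C_0^{(n)}$ by a \emph{finite} sequence of mutations. One has $C_0^{(n)}\subseteq\pi_J^{-1}C_0$, and if a chamber of $\fD(B)$ is contained in $\pi_J^{-1}C_i$, then, since the boundary of $\pi_J^{-1}C_i$ consists only of the two walls $\pi_J^{-1}(\overline{C_{i-1}}\cap\overline{C_i})$ and $\pi_J^{-1}(\overline{C_i}\cap\overline{C_{i+1}})$, any mutation lands in $\pi_J^{-1}C_{i-1}$, $\pi_J^{-1}C_i$ or $\pi_J^{-1}C_{i+1}$. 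By induction on the length of the mutation sequence, every reachable chamber lies in $\bigcup_i\pi_J^{-1}C_i$, hence $\overline{\cF(B)}\cap\pi_J^{-1}(D)=\emptyset$, regardless of what $\fD(B_J)$ looks like inside $D$. If you want to keep your route you must import the density of walls in the Badlands as an explicit external input; otherwise, replace the claim that ``$U$ meets no chamber'' by this induction along the chain $(C_i)$.
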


\begin{proof}
Let $C$ be the interior of a $g$-vector cone in $\cA(B)$, which is a chamber of $\fD(B)$ by Theorem \ref{thm:g-vector cone=chamber}, and $C'$ be a mutation of $C$. By Theorem \ref{prop:pull back}, $\pi_J C$ is contained in some chamber of $\fD(B_J)$ and $\pi_J C'$ is contained in the same chamber or its adjacent chambers. Therefore, since $\pi_J C_0^B=C_0^{B_J}$ and there are infinitely many chambers of $\fD(B_J)$ converging to $r_{\pm}$ as in Example \ref{exam:rank 2 g-vector fan}, $\fD(B)$ has no intersection with the cone in $\bR^n$ spanned by $\pi_J^{-1}r_{\pm}$. Since the cone has dimension $n$, $\fD(B)$ is not dense.
\end{proof}

We are ready to prove Theorem \ref{thm:mutation-infinite g-dense}.

\begin{proof}[Proof of Theorem \ref{thm:mutation-infinite g-dense}]
If $B$ is mutation-infinite, it is mutation equivalent to $B'$ such that $B'_J=B_{b,c}$ with $bc > 4$ for some $J \subset \{1,\ldots,n\}$. By Lemma \ref{lem:rank2 not dense}, $\cF(B')$ is not dense. Therefore, the assertion follows from Proposition \ref{prop:preserve denseness}.
\end{proof}

\subsection{Acyclic cluster algebras with dense $g$-vector fans}\label{subsec:acyclic}

The cluster algebra $\cA(B)$ is called \emph{acyclic} if $B$ is mutation equivalent to an acyclic skew-symmetrizable matrix. A classification of mutation-finite acyclic skew-symmetrizable matrices was given by Seven \cite{Se11} (by Buan and Reiten \cite{BR} for skew-symmetric case).

\begin{theorem}\cite[Theorem 3.5]{Se11}\label{thm:mutation-finite acyclic}
Suppose that $B$ is acyclic. Then $B$ is mutation-finite if and only if it is rank $2$, or $\Gamma(B)$ is either a Dynkin diagram or an extended Dynkin diagram.
\end{theorem}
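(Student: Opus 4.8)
The plan is to prove Theorem~\ref{thm:mutation-finite acyclic} by separating the two implications, treating the easy direction first and then the substantive one. For the ``if'' direction, suppose $B$ is acyclic with $\Gamma(B)$ a Dynkin or extended Dynkin diagram, or $B$ of rank $2$. Rank $2$ matrices are mutation-finite by Example~\ref{exam:rank2 mutations}. If $\Gamma(B)$ is Dynkin, then $B$ is finite type, and it is classical (Fomin--Zelevinsky) that finite type matrices have finitely many matrices in their mutation class; if $\Gamma(B)$ is extended Dynkin, then $B$ is affine type, and mutation-finiteness of affine type matrices is likewise known (and is subsumed in Theorem~\ref{thm:classify mutation-finite}). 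So this direction requires only citing the established classifications.

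For the ``only if'' direction, assume $B$ is acyclic and mutation-finite; I want to show $\Gamma(B)$ is rank $2$, Dynkin, or extended Dynkin. The natural strategy is to use the diagram $\Gamma(B)$ directly. First, reduce to the case $n\ge 3$. Since $B$ is acyclic, choose an acyclic representative and consider the weights $|b_{ij}b_{ji}|$ on the edges of $\Gamma(B)$. The key structural input is a ``local'' constraint: a mutation-finite acyclic matrix cannot contain, as a full subquiver/subdiagram, any configuration known to be mutation-infinite. I would invoke the following standard facts about subdiagrams of mutation-finite diagrams: any principal submatrix $B_I$ of a mutation-finite matrix is again mutation-finite (this is immediate since mutations at vertices in $I$ never involve vertices outside $I$, so the mutation class of $B_I$ embeds into that of $B$). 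Hence every connected acyclic subdiagram of $\Gamma(B)$ on $3$ or more vertices must itself be mutation-finite and acyclic.

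The heart of the argument is then a finite case-check at the level of graphs. Using the submatrix principle: (1) no edge of $\Gamma(B)$ can have weight $\ge 4$ when $n\ge3$, since an acyclic $2$-vertex subdiagram $1\xleftarrow{bc}2$ with $bc\ge4$ together with any third vertex attached gives an acyclic rank-$3$ diagram that one checks is mutation-infinite; (2) for the same reason the only admissible weights are those appearing in Dynkin/extended Dynkin graphs; (3) $\Gamma(B)$ must be a tree or have a unique cycle that is ``unoriented'' of type $A_n^{(1)}$-shape, because an oriented cycle would contradict acyclicity and two independent cycles or a cycle plus a pendant of the wrong kind produces a mutation-infinite acyclic subdiagram; (4) the branch structure (at most one branch vertex, controlled valences) is forced by excluding the minimal ``forbidden'' acyclic diagrams $\widetilde{D}_n$-overextended, $E_9$, and the doubly-weighted analogues. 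The claim is that an acyclic connected graph with all these exclusions is exactly a Dynkin or extended Dynkin diagram. The main obstacle, and the part requiring real work rather than citation, is assembling the complete list of minimal mutation-infinite acyclic diagrams and verifying the combinatorial claim that avoiding all of them characterizes (extended) Dynkin shapes; this is precisely the content of Seven's Theorem~3.5 in \cite{Se11}, and in a survey one would either reproduce his finite analysis of rank-$3$ and rank-$4$ cases (where mutation-infiniteness is detected by finding an explicit growing sequence of diagrams under iterated mutation) or simply cite it. I would present the reduction steps (submatrix principle, weight bound, tree/unique-cycle dichotomy) in detail and then defer the residual finite classification to the cited source.
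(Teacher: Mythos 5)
The paper offers no proof of this statement: it is imported verbatim as Theorem 3.5 of \cite{Se11}, so there is no internal argument to compare yours against. Judged as an independent proof, your proposal has a genuine gap, and you name it yourself. The ``if'' direction and the reduction steps are fine in outline --- rank $2$ via Example \ref{exam:rank2 mutations}, mutation-finiteness of finite and affine type, and the principal-submatrix principle $(\mu_k B)_I=\mu_k(B_I)$ for $k\in I$ --- but the entire substance of the ``only if'' direction is the assembly of the list of minimal mutation-infinite acyclic diagrams and the verification that a connected acyclic diagram avoiding them is Dynkin or extended Dynkin. Deferring that to \cite{Se11} makes the proof of the theorem, in the end, ``cite the theorem''; nothing beyond the easy direction has been established. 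In particular, your step (1) (no edge of weight $\ge 4$ once $n\ge 3$, in the acyclic case) and step (4) (control of branch vertices) each amount to nontrivial rank-$3$ and rank-$4$ computations you have not carried out. Note also that Theorem \ref{thm:classify mutation-finite} as stated in this paper cannot be invoked for the ``if'' direction: it asserts only that mutation-finite matrices lie on the list, not the converse.

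It is also worth recording that your sketched strategy is not the route Seven takes. His argument runs through quasi-Cartan companions and (semi)positive symmetrizable matrices, as the title of \cite{Se11} suggests: for acyclic matrices, mutation-finiteness is detected by the associated quasi-Cartan companion being positive or semipositive of corank one, and the classification of such matrices is precisely the Dynkin/extended Dynkin list. A forbidden-subdiagram argument of the kind you outline can be made to work --- it is close in spirit to Buan--Reiten's treatment of the skew-symmetric case \cite{BR} --- but only once the explicit finite list and the growth arguments for the excluded rank-$3$ and rank-$4$ diagrams are supplied. For a survey the citation is the right call; just present it as a citation rather than as a proof.
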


\begin{proof}[Proof of Theorem \ref{thm:g-dense acyclic}]
Suppose that $B$ is mutation equivalent to an acyclic matrix. By Theorems \ref{thm:mutation-infinite g-dense} and \ref{thm:mutation-finite acyclic}, if $\cF(B)$ is dense, then $B$ must be rank $2$, finite type or affine type. If $B$ is either finite type or affine type, then $\cF(B)$ is dense by Theorems \ref{thm:finite type complete} and \ref{thm:affine type dense}. If $B$ is rank $2$, then $\cF(B)$ is dense if and only if $B=B_{b,c}$ with $bc \le 4$, in which case, it is either finite type or affine type. Therefore, the assertion holds.
\end{proof}

\subsection{Future problems}\label{subsec:future problems}

For non-acyclic cluster algebras, we already have some information on the denseness of $g$-vector fans. In particular, Theorem \ref{thm:classify g-dense cluster} gives a complete classification of skew-symmetric matrices with dense $g$-vector fans except for type $X_6$ as in Table \ref{table:skew-symmetric}. On the other hand, for skew-symmetrizable matrices, Table \ref{table:skew-symmetrizable} follows from Theorems \ref{thm:finite type complete}, \ref{thm:affine type dense}, \ref{thm:classify g-dense cluster}, and \ref{thm:mutation-infinite g-dense}.
 \begin{table}[htb]
{\renewcommand{\arraystretch}{1.5}
 \begin{tabular}{c|c|c|c}
   \multicolumn{3}{c|}{Skew-symmetric matrix} & $g$-vector fan\\ \hline\hline
   & surface & once-punctured & non-dense\\ \cline{3-4}
   & type & otherwise & \multirow{3}*{dense} \\ \cline{2-3}
   finite & \multicolumn{2}{c|}{finite, affine type} &\\ \cline{2-3}
   mutation & \multicolumn{2}{c|}{$E_m^{(1,1)}$ ($m=6, 7, 8$)} &\\ \cline{2-4}
   type & \multicolumn{2}{c|}{$X_6$} & ?\\ \cline{2-4}
   & \multicolumn{2}{c|}{$X_7$} & \multirow{3}*{non-dense}\\ \cline{2-3}
   & \multicolumn{2}{c|}{$B_{b,b}$ ($|b| > 2$)} & \\ \cline{1-3}
   \multicolumn{3}{c|}{infinite mutation type} & 
  \end{tabular}}\vspace{2mm}
 \caption{Denseness of $g$-vector fans in skew-symmetric cases}
 \label{table:skew-symmetric}
\end{table}
\begin{table}[htb]
{\renewcommand{\arraystretch}{1.5}
 \begin{tabular}{c|c|c}
   \multicolumn{2}{c|}{Skew-symmetrizable matrix} & $g$-vector fan\\ \hline\hline
   & orbifold type & ?\\ \cline{2-3}
   \multirow{2}*{finite} & finite, affine type & \multirow{2}*{dense} \\ \cline{2-2}
   \multirow{2}*{mutation} & $E_m^{(1,1)}$ ($m=6, 7, 8$) \\ \cline{2-3}
   \multirow{2}*{type} & $X_6$, $F_4^{(\ast,+)}$, $F_4^{(\ast,\ast)}$, $G_2^{(\ast,+)}$, $G_2^{(\ast,\ast)}$ & ?\\ \cline{2-3}
   & $X_7$ & \multirow{3}*{non-dense}\\ \cline{2-2}
   & $B_{b,c}$ ($bc > 4$) & \\ \cline{1-2}
   \multicolumn{2}{c|}{infinite mutation type} & 
  \end{tabular}}\vspace{2mm}
 \caption{Denseness of $g$-vector fans in skew-symmetrizable cases}
 \label{table:skew-symmetrizable}
\end{table}
Therefore, the remaining unknown cases are that a connected skew-symmetrizable matrix $B$ is defined from a triangulated orbifold with at least one orbifold point, or one of types $X_6$, $F_4^{(\ast,+)}$, $F_4^{(\ast,\ast)}$, $G_2^{(\ast,+)}$ or $G_2^{(\ast,\ast)}$. In a follow-up work, we will study for cluster algebras associated with orbifolds.

\section{Application to representation theory}\label{sec:rep theory}

In this section, we introduce a main theorem (Theorem \ref{thm:categorification}) of additive categorification of skew-symmetric cluster algebras. Additive categorification of cluster algebras has been studied in many contexts, for example, decorated representations of quivers with potentials \cite{DWZ08}, cluster categories \cite{P11a,P11b}, and derived categories of Ginzburg differential graded algebras \cite{N}. Here, we deal with it in terms of silting theory. Theorems \ref{thm:g-dense acyclic} and \ref{thm:categorification} will give Theorem \ref{thm:g-tame hereditary}. We fix an algebraically closed field $K$ throughout this section.

\subsection{Quivers with potentials and Jacobian algebras}\label{subsec:QP and Jacobian algebras}

We recall quivers with potentials and their Jacobian algebras which are used to categorify skew-symmetric cluster algebras. We refer to \cite{DWZ08} for the details.

Let $Q$ be a quiver without loops. We denote by $\widehat{KQ}$ the complete path algebra of $Q$ with radical-adic topology. A \emph{potential} $W \in \widehat{KQ}$ of $Q$ is a (possibly infinite) linear combination of oriented cycles in $Q$. The pair $(Q,W)$ is called a \emph{quiver with potential} (QP for short). For an oriented cycle $\alpha_1\cdots\alpha_m$ and an arrow $\alpha$ of $Q$, we define
\[
\partial_{\alpha}(\alpha_1\cdots\alpha_m)=\sum_{i : \alpha_i=\alpha}\alpha_{i+1}\cdots\alpha_m\alpha_1\cdots\alpha_{i-1}.
\]
This is extended to the cyclic derivative $\partial_{\alpha}(W)$ of $W$ by linearity and continuously. The \emph{Jacobian ideal} $J(Q,W)$ is the closure, on radical-adic topology, of the ideal of $\widehat{KQ}$ generated by the set $\{\partial_{\alpha}W \mid \alpha : \text{arrows of $Q$}\}$. The \emph{Jacobian algebra} $\cP(Q,W)$ is the quotient algebra $\widehat{KQ}/J(Q,W)$. In particular, if $Q$ is acyclic, then the potentials must be $0$ and the Jacobian algebra $\cP(Q,0)$ is just the path algebra $KQ$ of $Q$.

\begin{example}\label{exam:hereditary}
We consider finite dimensional hereditary algebras. It is well-known that any such an algebra $\Lambda$ is Morita equivalent to the path algebra $KQ$ of an acyclic quiver $Q$, that is, there is an equivalence of categories $\mod \Lambda \simeq \mod KQ$ (see e.g. \cite{ASS}), where $\mod \Lambda$ is the category of finitely generated left $\Lambda$-modules. Therefore, a finite dimensional hereditary algebra is Morita equivalent to some Jacobian algebra.
\end{example}

To define mutations of a QP, we need some preparation. We say that two potentials $W$ and $W'$ of $Q$ are \emph{cyclically equivalent} if $W-W'$ is contained in the closure of the span of all elements $\alpha_1\cdots\alpha_m-\alpha_2\cdots\alpha_m\alpha_1$ for oriented cycles $\alpha_1\cdots\alpha_m$. For a quiver $Q$, we denote by $Q_0$ the set of vertices of $Q$ and by $Q_1$ the set of arrows of $Q$. We say that two QPs $(Q,W)$ and $(Q',W')$ with $Q_0=Q'_0$ are \emph{right-equivalent} if there is an automorphism $\phi : \widehat{KQ} \simeq \widehat{KQ'}$ of the $K$-algebras such that $\phi$ is the identity on $Q_0$, and $\phi(W)$ and $W'$ are cyclically equivalent. 
We say that a QP $(Q,W)$ is
\begin{itemize}
\item \emph{reduced} if $W$ is a formal linear combination of oriented cycles of length at least three;
\item \emph{trivial} if $W$ is a formal linear combination of oriented cycles of length two and $J(Q,W)=\rad \widehat{KQ}$.
\end{itemize}

For two QPs $(Q,W)$ and $(Q',W')$ with $Q_0=Q'_0$, the direct sum $(Q,W) \oplus (Q',W')$ is a QP $(Q'',W+W')$ with $Q''_0=Q_0$ and $Q''_1=Q_1 \sqcup Q'_1$.

\begin{theorem}\cite[Theorem 4.6]{DWZ08}
For a QP $(Q,W)$, there are a reduced QP $(Q_{\rm red},W_{\rm red})$ and a trivial QP $(Q_{\rm triv},W_{\rm triv})$ such that $(Q,W)$ and $(Q_{\rm red},W_{\rm red}) \oplus (Q_{\rm triv},W_{\rm triv})$ are right-equivalent. They are unique up to right-equivalence.
\end{theorem}

We are ready to define the notion of mutations of a QP $(Q,W)$. We assume that $Q$ has no oriented cycles of length two incident to $k \in Q_0$. The \emph{mutation $\mu_k(Q,W)$ of $(Q,W)$ at $k$} is a reduced QP $(P_{\rm red},V_{\rm red})$, where the QP $(P,V)$ is defined as follows:
\begin{itemize}
\item $P$ is the quiver obtained from $Q$ by the following steps:
\begin{itemize}
\item[(1)] For any path $i \xrightarrow{a} k \xrightarrow{b} j$, add an arrow $i \xrightarrow{[ba]} j$.
\item[(2)] Replace any arrow $i \xrightarrow{a} k$ (resp., $k \xrightarrow{b} j$) with an arrow $i \xleftarrow{a^{\ast}} k$ (resp., $k \xleftarrow{b^{\ast}} j$).
\end{itemize}
\item By cyclically equivalence, we can assume that $W$ has no cycles starting and ending at $k$. Then $V=[W]+\Delta$, where
\begin{itemize}
\item[(1)] $[W]$ is obtained from $W$ by substituting $[ba]$ for any factor $ba$ with $i \xrightarrow{a} k \xrightarrow{b} j$;
\item[(2)] $\Delta=\sum_{(i \xrightarrow{a} k \xrightarrow{b} j) \in Q} [ba]a^{\ast}b^{\ast}$.
\end{itemize}
\end{itemize}

We say that a potential $W$ of $Q$ is \emph{non-degenerate} if every quiver obtained from $(Q,W)$ by any sequence of mutations has no oriented cycles of length two.

\begin{theorem}\cite[Corollary 7.4]{DWZ08}\label{thm:non-degenerate potential}
Let $Q$ be a quiver without loops nor oriented cycles of length two. If $K$ is uncountable, then there exists a non-degenerate potential of $Q$. In particular, if $Q$ is acyclic, then the potential $0$ is non-degenerate without assumption on $K$.
\end{theorem}

\begin{remark}
For a skew-symmetric matrix $B$, the diagram $\Gamma(B)$ is just a quiver without loops nor oriented cycles of length two. Thus $\Gamma(B)$ has a non-degenerate potential $W$. Then the quiver of $\mu_k(\Gamma(B),W)$ for $k \in \Gamma(B)_0$ is equal to $\Gamma(\mu_k B)$ \cite[Proposition 7.1]{DWZ08}.
\end{remark}

\subsection{$g$-vector fans on $2$-term silting theory}\label{subsec:g-vector fans on 2-term}

Let $\Lambda$ be a $K$-algebra which is one of the following:
\begin{enumerate}
\item a finite dimensional algebra;
\item a Jacobian algebra of a QP.
\end{enumerate}
We refer to \cite{AI,ASS,Kim} for the basic notations in the representation theory and $2$-term silting theory of $\Lambda$. We denote by $\proj\Lambda$ the category of finitely generated projective left $\Lambda$-modules and by $K^b(\proj\Lambda)$ the homotopy category of bounded complexes over $\proj\Lambda$ with suspension functor $[1]$. Then $K^b(\proj\Lambda)$ is a Krull-Schmidt category (see \cite[Lemma 2.17]{KY} or \cite[Corollary 4.6]{KM} for Jacobian algebras).

\begin{definition}
We say that an object $X \in K^b(\proj\Lambda)$ is
\begin{itemize}
\item \emph{presilting} if $\Hom_{K^b(\proj\Lambda)}(X,X[i])=0$ for all $i>0$;
\item \emph{silting} if it is presilting and $K^b(\proj\Lambda)$ is equal to the smallest triangulated subcategory of $K^b(\proj\Lambda)$, containing $X$, closed under direct summands.
\end{itemize}
We say that a presilting object in $K^b(\proj\Lambda)$ is \emph{$2$-term} if it is concentrated in degrees $-1$ and $0$.
\end{definition}

We denote by $\ipsilt\Lambda$ (resp., $\twosilt\Lambda$) the set of isomorphism classes of indecomposable $2$-term presilting (resp., basic $2$-term silting) objects in $K^b(\proj\Lambda)$.

The Grothendieck group of $K^b(\proj\Lambda)$, denoted by $K_0(\proj\Lambda)$, is a free abelian group whose basis is given by isomorphism classes of indecomposable projective $\Lambda$-modules. Thus this induces an isomorphism
\[
K_0(\proj\Lambda) \simeq \bZ^n,
\]
where $n$ is the number of indecomposable direct summands of $\Lambda$.

\begin{definition}
For an object $X \in K^b(\proj\Lambda)$, the \emph{$g$-vector} of $X$ is the image $[X] \in K_0(\proj\Lambda)$ of $X$.
\end{definition}

The $g$-vectors of $2$-term presilting objects in $K^b(\proj\Lambda)$ form a fan. It follows from \cite[Theorem 6.5]{DIJ} for finite dimensional algebras and from \cite[Proposition 3.1]{P11b} for Jacobian algebras (see also \cite{DF,H}).

\begin{theorem}\cite{DIJ,P11b}\label{thm:silting-gvector-fan}
There is a simplicial fan each of whose 
\begin{enumerate}
\item ray is spanned by the $g$-vector of an indecomposable $2$-term presilting object in $K^b(\proj\Lambda)$;
\item maximal cone is spanned by the $g$-vectors of indecomposable direct summands of a $2$-term silting object in $K^b(\proj\Lambda)$.
\end{enumerate}
\end{theorem}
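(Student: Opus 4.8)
The statement to prove is Theorem \ref{thm:silting-gvector-fan}, which asserts the existence of a simplicial fan whose rays are spanned by $g$-vectors of indecomposable $\tau$-rigid pairs and whose maximal cones are spanned by the $g$-vectors of direct summands of $\tau$-tilting pairs. The plan is to build this fan directly from the combinatorics of $\tau$-tilting theory developed in \cite{AIR}, following \cite{DIJ,DF}.

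First I would recall the cone $C(M,P)$ associated to a $\tau$-rigid pair $(M,P)$: writing $M = \bigoplus M_i$ and $P = \bigoplus P_j$ with $M_i, P_j$ indecomposable, set $C(M,P)$ to be the nonnegative span of the vectors $g_\Lambda(M_i)$ and $-g_\Lambda(P_j)$ in $K_0(\proj\Lambda) \otimes \bR \simeq \bR^n$. The crucial input is the result from \cite{AIR} that $g$-vectors of the indecomposable summands of a $\tau$-rigid pair are linearly independent (this is the $\tau$-tilting analogue of Theorem \ref{thm:linearly independent}); hence each $C(M,P)$ is a simplicial cone of dimension $|M|+|P|$, and in particular a $\tau$-tilting pair gives a full-dimensional simplicial cone. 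One then takes $\Delta(\Lambda)$ to be the collection of all such cones $C(M,P)$ together with their faces, noting that a face of $C(M,P)$ is again of the form $C(M',P')$ for a direct summand $(M',P')$, so the collection is closed under taking faces.

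The heart of the argument — and the main obstacle — is to show that $\Delta(\Lambda)$ is genuinely a fan, i.e. that the cones intersect along common faces. I would argue this in two steps. The first step is that two distinct basic $\tau$-tilting pairs that differ by a single mutation give cones meeting exactly along the common facet spanned by their shared summands; this follows from the mutation theory of \cite[Theorem 0.4]{AIR}, which produces the unique exchange partner $N''$ of a summand $N'$, and from a sign-coherence/compatibility statement for the two complementary $g$-vectors at a mutation (the analogue of the transition rule, Theorem \ref{thm:linear transformation}). The second step is to promote this local statement to a global one: any two maximal cones $C(N_1), C(N_2)$ with overlapping interiors must coincide. Here the key fact, again from \cite{AIR} (and made explicit in \cite{DIJ}), is that a point in the interior of a maximal cone determines the $\tau$-tilting pair uniquely — concretely, the functional given by a generic interior point recovers the torsion pair, hence the $\tau$-tilting pair via the bijection between $\stautilt\Lambda$ and functorially finite torsion classes. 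Thus interiors of distinct maximal cones are disjoint, and combined with the local facet-matching this yields the fan property; the simpliciality is immediate from the linear independence already used.

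Finally, conditions (1) and (2) are read off from the construction: the rays of $\Delta(\Lambda)$ are precisely the $C(M',P')$ with $|M'|+|P'|=1$, i.e. the half-lines spanned by $g$-vectors of indecomposable $\tau$-rigid pairs (every indecomposable $\tau$-rigid pair extends to a $\tau$-tilting pair by \cite{AIR}, so every such ray occurs), and the maximal cones are exactly the $C(N)$ for $N \in \stautilt\Lambda$, spanned by the $g$-vectors of the indecomposable summands of $N$. I would then remark that the connection to the cluster-algebraic picture of Subsection \ref{subsec:g-vector fans} is more than an analogy: for $\Lambda$ hereditary the fan $\Delta(\Lambda)$ is identified with a ``half'' of $\cF(B)$ where $B$ encodes the quiver of $\Lambda$, which is the bridge used in Subsection \ref{subsec:categorification} to deduce Theorem \ref{thm:g-tame hereditary}.
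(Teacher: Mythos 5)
The paper does not actually prove Theorem \ref{thm:silting-gvector-fan}: it is quoted from \cite{DIJ,DF} with no argument given, so there is no in-paper proof to compare against. Your sketch reconstructs the standard argument of those references faithfully: the cones $C(M,P)$ spanned by $g_{\Lambda}(M_i)$ and $-g_{\Lambda}(P_j)$, simpliciality via linear independence of the $g$-vectors of the summands of a $\tau$-rigid pair, closure under faces, and completion of any indecomposable $\tau$-rigid pair to a $\tau$-tilting pair, are all exactly the ingredients used in \cite{DIJ}. The one place where your argument is softer than what the references actually establish is the fan property itself. Disjointness of the interiors of \emph{maximal} cones, plus facet-matching across single mutations, does not by itself rule out two cones meeting along their boundaries in a set that is not a common face (mutation only relates cones that already share a facet, so it gives no control over non-adjacent pairs). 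What \cite{DF} supplies, and what the proof really rests on, is the stronger statement that every vector $\theta \in K_0(\proj\Lambda)\otimes\bR$ determines the canonical decomposition of a general presentation with that $g$-vector, hence determines the unique $\tau$-rigid pair $(M,P)$ with $\theta$ in the \emph{relative interior} of $C(M,P)$; equivalently, in \cite{DIJ} one finds directly that $C(M,P)\cap C(M',P')=C(M'',P'')$ for the maximal common direct summand $(M'',P'')$. Replacing your ``generic interior point of a maximal cone recovers the torsion pair'' step with this relative-interior statement for cones of all dimensions closes the gap; otherwise the proposal is the right argument and matches the cited proofs.
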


\begin{definition}\label{def:silting-gvector-fan}
The fan in Theorem \ref{thm:silting-gvector-fan} is called the \emph{$g$-vector fan of $\Lambda$}, and it is denoted by $\cF(\Lambda)$.
\end{definition}

\begin{definition}
The algebra $\Lambda$ is called \emph{$g$-tame} if the $g$-vector fan $\cF(\Lambda)$ is dense.
\end{definition}

It is known that the following class is $g$-tame:
\begin{itemize}
\item $\tau$-tilting finite algebras \cite{DIJ};
\item Jacobian algebras associated with triangulated surfaces \cite{Y20};
\item finite dimensional tame algebras \cite{PY} (in particular, path algebras of extended Dynkin quivers \cite{H});
\item complete special biserial algebras \cite{AY};
\item complete preprojective algebras of extended Dynkin quivers \cite{KM};
\end{itemize}
Note that finite dimensional special biserial algebras and most of Jacobian algebras associated with triangulated surfaces are finite dimensional tame algebras (see \cite{WW} for the former and \cite{GLFS} for the latter). Theorem \ref{thm:g-tame hereditary} completely characterizes $g$-tame hereditary algebras.

On the other hand, on silting theory, mutations are defined as follows: Let $T=X \oplus U \in K^b(\proj\Lambda)$ be a silting object with $X$ indecomposable. If there exists a minimal left $(\add U)$-approximation $f$ of $X$, then we consider a triangle in $K^b(\proj\Lambda)$
\[
X \overset{f}{\longrightarrow} U' \longrightarrow Y \longrightarrow X[1].
\]
Then $Y \oplus U$ is a silting object, called the \emph{left mutation} of $T$ with respect to $X$. Similarly, if there exists a minimal right $(\add U)$-approximation $g$ of $X$, then we consider a triangle in $K^b(\proj\Lambda)$
\[
Z \longrightarrow U'' \overset{g}{\longrightarrow} X \longrightarrow Z[1].
\]
Then $Z \oplus U$ is a silting object, called the \emph{right mutation} of $T$ with respect to $X$. A \emph{mutation} of $T$ with respect to $X$ is a left or right mutation of $T$ with respect to $X$. If $\Lambda$ is finite dimensional, then it is clear that there are the above approximations $f$ and $g$ for any silting object $T=X \oplus U$. If $\Lambda$ is a Jacobian algebra, then it follows from \cite{KY} that there are the above approximations $f$ and $g$ for a silting object $T=X \oplus U$ obtained from $\Lambda$ by a sequence of mutations. In general, it is not known if the approximations exist. Moreover, it is not necessary that a mutation of a $2$-term silting object is $2$-term.

\begin{theorem}\label{thm:existmutations}
Let $T=X \oplus U \in \twosilt\Lambda$ with $X$ indecomposable.
\begin{enumerate}
\item \cite[Corollary 3.8(a)]{AIR} If $\Lambda$ is finite dimensional, then exactly one mutation of $T$ with respect to $X$ is $2$-term.
\item \cite[Theorem 2.18]{P11a}\cite[Proposition 3.1]{P11b} If $\Lambda$ is a Jacobian algebra and $T$ is a $2$-term silting object obtained from $\Lambda$ by a sequence of mutations, then exactly one mutation of $T$ with respect to $X$ is $2$-term.
\end{enumerate}
\end{theorem}

After this, we restrict mutations to ones between $2$-term silting objects. We denote by $\twosilt^{\circ}\Lambda$ the subset of $\twosilt\Lambda$ consisting of basic $2$-term silting objects obtained from $\Lambda$ by sequences of mutations, that is, every silting object appearing in the process is $2$-term. We denote by $\ipsilt^{\circ}\Lambda$ the subset of $\ipsilt\Lambda$ consisting of indecomposable $2$-term presilting objects which are direct summands of objects in $\twosilt^{\circ}\Lambda$.

\subsection{Additive categorification of cluster algebras}\label{subsec:categorification}

In this subsection, we recall an additive categorification of skew-symmetric cluster algebras in terms of $2$-term silting objects.

Let $B$ be a skew-symmetric matrix. Then the diagram $\Gamma(B)$ is just a quiver without loops nor oriented cycles of length two. By Theorem \ref{thm:non-degenerate potential}, there exists a non-degenerate potential of $\Gamma(B)$ under the assumption that $K$ is uncountable (this assumption is not necessary if $B$ is acyclic). We fix such a potential $W(B)$ and denote the associated Jacobian algebra $\cP(B):=\cP(\Gamma(B),W(B))$. The following is due to Theorem \ref{thm:existmutations}(2) and the results in \cite{N} or \cite{P11b} (see \cite[Theorem 7.9]{Ke}). It also follows from \cite[Theorems 3.2 and 4.7]{AIR}, \cite[Theorem 6.3]{FK} and \cite[Corollary 3.5]{CIKLFP} via $\tau$-tilting theory for the case that $\cP(B)$ is finite dimensional.

\begin{theorem}[Additive categorification]\label{thm:categorification}
Let $B$ be a skew-symmetric matrix. Then there is a bijection
\[
\ipsilt^{\circ}\cP(B) \leftrightarrow \{\text{Cluster variables of $\cA(B)$}\}
\]
that preserves $g$-vectors. It induces a bijection
\[
\twosilt^{\circ}\cP(B) \leftrightarrow \{\text{Clusters of $\cA(B)$}\}
\]
that sends $\cP(B)$ to the initial cluster and commutes with mutations. In particular, $\cF(B)$ coincides with the subfan of $\cF(\cP(B))$ consisting of $g$-vectors of indecomposable $2$-term presilting objects in $\ipsilt^{\circ}\cP(B)$.
\end{theorem}

Finally, we get Theorem \ref{thm:g-tame hereditary}.

\begin{proof}[Proof of Theorem \ref{thm:g-tame hereditary}]
By Example \ref{exam:hereditary}, any finite dimensional hereditary algebra is Morita equivalent to a Jacobian algebra $J(Q,0)$ for some acyclic quiver $Q$. On the other hand, there is a skew-symmetric matrix $B$ whose diagram is $Q$, thus $\cP(B)=J(Q,0)$. Moreover, any basic $2$-term silting object can be obtained from the initial one for the acyclic case \cite[Proposition 3.5]{BMRRT}. The assertion follows from Theorems \ref{thm:g-dense acyclic} and \ref{thm:categorification}.
\end{proof}

\medskip\noindent{\bf Acknowledgements}.
 The author would like to thank Osamu Iyama for his guidance and helpful comments. He also thanks Tomoki Nakanishi for useful discussion about rank $2$ scattering diagrams and Nathan Reading for helpful comments on Theorem \ref{thm:affine type dense}.


\end{document}